\def\titlerunning#1{\gdef\titrun{#1}}
\def\author#1{\gdef\autrun{\def\and{\unskip, }#1}\gdef\@author{#1}}
\def\address#1{{\def\and{\\\hspace*{18pt}}\renewcommand{\thefootnote}{}%
\footnote {#1}}%
\markboth{\autrun}{\titrun}}
\def\email#1{e-mail: #1}
\def\subjclass#1{{\renewcommand{\thefootnote}{}%
\footnote{\emph{Mathematics Subject Classification (2010):} #1}}}
\newtheorem{thm}{Theorem}[section]
\newtheorem{cor}[thm]{Corollary}
\newtheorem{lem}[thm]{Lemma}
\newtheorem{prop}[thm]{Proposition}
\newtheorem{ex}[thm]{Example}
\newtheorem{notation}[thm]{Notation}
\theoremstyle{definition}
\newtheorem{defn}[thm]{Definition}
\theoremstyle{remark}
\newtheorem{rem}[thm]{Remark}
\numberwithin{equation}{section}
\newcommand{\F}{\mathbb{F}}
\newcommand{\Z}{\mathbb{Z}}
\begin{document}
\baselineskip=17pt
\titlerunning{Partitioned difference families}
\title{Partitioned difference families\\
and harmonious linear spaces}

\author{Marco Buratti
\and
Dieter Jungnickel}

\date{}

\maketitle

\address{
M. Buratti: Dipartimento di Scienze di Base e Applicate per l’Ingegneria (S.B.A.I.), Sapienza
Universit\`a di Roma, Via Antonio Scarpa, 10, Italy; \email{marco.buratti@uniroma1.it}
\and
Dieter Jungnickel: Mathematical Institute, University of Augsburg, 86135 Augsburg, Germany;
\email{jungnickel@math.uni-augsburg.de}
}

\subjclass{05B05, 05B10}

\begin{abstract}
We say that a linear space is  {\it harmonious} if it is resolvable and admits an automorphism
group acting sharply transitively on the points and transitively on the parallel classes. 
Generalizing old results by the first author et al. we present some difference methods to construct harmonious linear spaces.
We prove, in particular, that for any finite non-singleton subset $K$ of $\Z^+$ there are infinitely many values of
$v$ for which there exists a {\it partitioned difference family} that is the base parallel class of a harmonious linear space with $v$ 
points whose block sizes are precisely the elements of $K$.
\end{abstract}

\noindent {\textbf{Keywords:}}  linear space; automorphism group;
partitioned difference family; strong difference family; cyclotomy.

\section{Introduction}
We recall that a {\it linear space} S$(2,K,v)$ is a pair $(V,{\cal B})$ where $V$ is a set of $v$ ``points" and $\cal B$ is
a set of subsets of $V$ called {\it blocks} with sizes belonging to $K$ and the property that
any 2-subset of $V$ is contained in exactly one block. A S$(2,K,v)$ is {\it mandatory} if the set of block sizes is precisely $K$.
Throughout this paper, speaking of a S$(2,K,v)$ it will be tacitly understood that it is mandatory.
When $K$ is a singleton $\{k\}$ one speaks of a
{\it Steiner 2-design} and writes S$(2,k,v)$ (see, e.g., \cite{BJL}).

A {\it resolution} of a linear space is a partition $\cal R$ of its block set into classes ({\it parallel classes}) 
each of which is, in its turn, a partition of the point set. A linear space is {\it resolvable} if it admits at
least one resolution. Speaking of a {\it resolved} linear space one means a triple $(V,{\cal B},{\cal R})$ 
where $(V,{\cal B})$ is a linear space and $\cal R$ is a specific resolution of it.
An automorphism group of such a resolved linear space is a subgroup of $Sym(V)$
leaving $\cal R$ (and consequently $\cal B$) invariant.

The construction of a resolvable Steiner 2-design is not easy in general. Suffice it to say that for any fixed $k\geq5$ the set of values of $v$ for which
there is a resolvable S$(2,k,v)$ is still unknown. 
Anyway it was proved in \cite{RW} (see also \cite{BJL}, p. 801) 
that a resolvable S$(2,k,v)$ certainly exists if $v$ is admissible -- which means $v\equiv k$ (mod $k(k-1)$) -- and ``sufficiently large".
As far as we are aware the analogous result for general linear spaces is still lacking.

We propose the following definitions.
\begin{defn}\label{def}
A {\it harmonious} linear space S$(2,K,v)$, briefly a HS$(2,K,v)$, is a resolved S$(2,K,v)$ with an automorphism group
acting sharply transitively on the points and transitively on the parallel classes. 

An {\it almost harmonious} linear space S$(2,K,v)$, briefly an AHS$(2,K,v)$, is a resolved S$(2,K,v)$ with an automorphism group
acting sharply transitively on all but one point and transitively on the parallel classes. 
\end{defn}

\begin{ex}
Here is the resolution ${\cal R}=\{{\cal P}_0,{\cal P}_1,{\cal P}_2,{\cal P}_3\}$ of a HS$(2,\{2,3\},8)$ with point set $\Z_8$:
$${\cal P}_0=\bigl{\{}\{0,4\}, \ \{1,6,7\}, \ \{5,2,3\}\bigl{\}};$$
$${\cal P}_1=\bigl{\{}\{1,5\}, \ \{2,7,0\}, \ \{6,3,4\}\bigl{\}};$$
$${\cal P}_2=\bigl{\{}\{2,6\}, \ \{3,0,1\}, \ \{7,4,5\}\bigl{\}};$$
$${\cal P}_3=\bigl{\{}\{3,7\}, \ \{4,1,2\}, \ \{0,5,6\}\bigl{\}}.$$
Indeed it is readily seen that we have ${\cal P}_i+1={\cal P}_{i+1 \ (mod \ 4)}$, hence 
the group $\widehat\Z_8$ of translations modulo 8 acts sharply transitively on $\Z_8$ and
transitively on $\cal R$.
\end{ex}

If in the above example we add an extra point, say $\infty$, to the blocks of size 2 we get an AHS$(2,3,9)$.

It is easy to see that a Steiner 2-design cannot be harmonious. On the other hand, several classes of almost 
harmonious Steiner 2-designs have been constructed in \cite{BYW,BZ}.
In particular, it was proved in \cite{BYW} that for any integer $k$ there are infinitely many values of $v$
for which there exists an almost harmonious S$(2,k,v)$.

In this paper we give some constructions for harmonious linear spaces and, as main result, we prove
that for any finite non-singleton set $K$ of positive integers, there are infinitely many values of $v$ for which there 
exists a HS$(2,K,v)$.

The starter parallel class of each of our harmonious linear spaces is a
{\it partitioned difference family}, that is a partition of a group in blocks 
whose lists of differences cover every non-identity element of the group a constant number of times.
Thus, as a consequence of our main result, for any finite non-singleton 
set $K$ of positive integers  there are infinitely many values of $v$ for which there exists
a partitioned difference family in a group of order $v$ with set of block sizes equal to $K$.

\section{Use of difference families}

The {\it list of differences} of a multisubset $B=\{b_1,\dots,b_k\}$ of an additive group $G$
is the multiset $\Delta B=\{b_i-b_j \ | \ 1\leq i, j \leq k; i\neq j\}$. 
More generally,
if $\cal F$ is a collection of multisubsets of $G$,  its list of differences is the multiset union
of the lists of differences of all its members:
 $\Delta{\cal F}:=\bigcup_{B\in{\cal F}}\Delta B$.
 The multiset union of all the members of ${\cal F}$ will be called the
{\it flatten} of $\cal F$.

If $\Delta{\cal F}$ is $\lambda$ times $G\setminus H$ with $H$
a subgroup of $G$, then ${\cal F}$ is a {\it difference family in $G$ relative to $H$ with index $\lambda$}.
In this case one briefly says that $\cal F$ is a $(G,H,K,\lambda)$-DF where $K$ denotes the multiset of all sizes of the members of $\cal F$.

If $\Delta{\cal F}$ is $\lambda$ times all of $G$ (zero included) then ${\cal F}$ is a $(G,K,\lambda)$ 
{\it strong difference family} (SDF) in $G$ and we briefly say that it is a $(G,K,\lambda)$-SDF with $K$ defined as above. 
Note that the index of a SDF is necessarily even.

The members of a DF or SDF are called {\it blocks}.
It is evident that every block of a difference family of index 1 is a set, not a multiset. In contrast at least one block of a SDF has some repeated elements.
A DF or SDF whose blocks have all the same size $k$ is said to be {\it uniform} and one usually writes 
$(G,H,k,\lambda)$-DF or $(G,k,\lambda)$-SDF, respectively.

Speaking of a $(G,K,\lambda)$-DF (without the adjective ``strong") one usually means a
$(G,H,K,\lambda)$-DF with $H=\{0\}$. A difference family with only one block is called a {\it difference set} \cite{DPS}.
A $(G,K,\lambda)$-DF whose blocks partition $G$ 
(or, equivalently, whose flatten is $G$) is said to be {\it partitioned} (PDF).

Partitioned difference families have been introduced by Ding and Ying \cite{DY} in view of 
their useful applications to {\it constant composition codes}.
The equivalent notion of a {\it zero difference balanced function} (ZDBF) was introduced a few years 
later by Ding \cite{D}. As pointed out in \cite{BJ,BJ2}, this double terminology  unfortunately caused some 
confusion. Indeed several authors presented some seemingly new ZDBFs which,
as a matter of fact, could be seen as PDFs known since a long time (see also \cite{BDCC}).

The following result is very well known. 
\begin{lem}\label{DF->LS}
Let $H$ be a subgroup of order $u$ of a group $G$ of order $v$, and let $\cal F$ be a $(G,H,K',1)$-DF.
Then $$\{B+g \ | \ B\in{\cal F}; g\in G\} \ \cup \ \{\mbox{right cosets of $H$ in $G$}\}$$ 
is the block set of a S$(2,K,v)$ where $K$ is the underlying set of $K' \ \cup \ \{u\}$.
\end{lem}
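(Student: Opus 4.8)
The plan is to verify directly that the proposed collection of sets is the block set of a linear space on the point set $V=G$; that is, I would check that every $2$-subset of $G$ is contained in exactly one block. The natural case distinction is according to whether the two chosen points lie in a common coset of $H$ or not, and the whole argument rests on the standard dictionary between translates of the base blocks and the difference list $\Delta\mathcal{F}$.

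First I would set up this dictionary. Fix two distinct points $x,y\in G$ and put $d=x-y\neq 0$. A translate $B+g$ with $B\in\mathcal{F}$ contains both $x$ and $y$ precisely when $x-g$ and $y-g$ both lie in $B$; writing $b_i=x-g$ and $b_j=y-g$, the admissible parameters $g$ correspond bijectively to the ordered pairs $(b_i,b_j)$ of elements of $B$ with $b_i-b_j=d$ (note $b_i\neq b_j$ since $d\neq 0$). Summing over all $B\in\mathcal{F}$, the number of translate-blocks passing through the pair $\{x,y\}$ is therefore exactly the multiplicity with which $d$ occurs in $\Delta\mathcal{F}$.

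Next I would invoke the defining property of a $(G,H,K',1)$-DF, namely that $\Delta\mathcal{F}$ equals $G\setminus H$ with every element occurring exactly once. If $d\notin H$, then $d$ occurs once in $\Delta\mathcal{F}$, so exactly one translate of a base block runs through $\{x,y\}$; moreover $x$ and $y$ lie in distinct cosets of $H$, so no coset block contains the pair, and the total count is one. If instead $d\in H\setminus\{0\}$, then $d$ does not occur in $\Delta\mathcal{F}$, so no translate contains the pair, while $x$ and $y$ lie in a common coset of $H$, which is then the unique block through $\{x,y\}$. In either case the pair is covered exactly once, which is the linear-space condition. Reading off the sizes is immediate: translates of members of $\mathcal{F}$ have sizes ranging over $K'$, and every coset of $H$ has size $u=|H|$, so the set of occurring block sizes is the underlying set of $K'\cup\{u\}$.

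The argument is entirely routine, so there is no serious obstacle; the only point requiring a little care is the bookkeeping in the counting step. Since the index is $1$, each base block is a genuine set with no repeated elements, and this is exactly what forces the correspondence between admissible translation parameters and occurrences of $d$ to be a bijection, so that a single block never covers the pair $\{x,y\}$ more than once. I would make sure to state this explicitly, since it is the hypothesis that turns the difference-count into the desired ``exactly one'' conclusion.
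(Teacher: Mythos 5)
Your proof is correct: the bijection between translation parameters $g$ with $\{x,y\}\subseteq B+g$ and ordered pairs in $B$ with difference $d=x-y$, combined with $\Delta\mathcal{F}$ being exactly $G\setminus H$, gives the ``exactly one block'' property in both cases ($d\in H\setminus\{0\}$ handled by the coset blocks, $d\notin H$ by the translates). The paper offers no proof at all --- it states the lemma as ``very well known'' --- and your argument is precisely the standard folklore verification that is being taken for granted, including the correct observation that index $1$ is what prevents a single translate from covering a pair twice.
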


In \cite{BZ}, a $(G,H,k,1)$-DF has been called {\it resolvable} if its flatten is a complete system 
of representatives for the non-trivial left cosets of $H$ in $G$. 
%It was also proved that such a
% resolvable DF is completely equivalent to an AHS$(2,k,|G|+1)$. 
Here, we extend this notion also to non-uniform DF's.
 \begin{defn}
 A $(G,H,K,1)$-DF is {\it resolvable} if its flatten is a complete system of representatives
for the non-trivial left cosets of $H$ in $G$.
 \end{defn}
 
 Throughout the paper we agree to use the following notation.

\begin{notation}
Multisets will be often denoted in ``exponential notation". Hence, writing $K=\{k_1^{\mu_1},...,k_t^{\mu_t}\}$ we mean
the multiset with underlying set $\{k_1,...,k_t\}$ where each $k_i$ has multiplicity $\mu_i$. The exponent $\mu_i$ will be omitted
when it is equal to $1$. If $n$ is a positive integer and $K$ is a multiset, then  $\underline{n}K$ denotes
the multiset union of $n$ copies of $K$. Thus, given $K$ as above, we have $\underline{n}K=\{k_1^{\mu_1n},...,k_t^{\mu_tn}\}$. 
The multiset consisting of the zero element of an additive group repeated a certain number $n$ of times, that is
$\underline{n}\{0\}=\{0^n\}$, will be denoted by $O_n$.
\end{notation}

 The proof of the ``only if" part of Theorem 1 in \cite{BZ} can be easily adapted to obtain the following.

\begin{thm}\label{RDF->PDF}
Let $H$ be a subgroup of order $u$ of an additive group $G$ of order $v$, 
and let $\cal F$ be a resolvable $(G,H,K',1)$-DF. Then  ${\cal P}=\{B+h \ | \ B\in{\cal F},h\in H\} \ \cup \ \{H\}$
is a $(G,\underline{u}K' \ \cup \ \{u\},u)$-PDF which is the base parallel class of a HS$(2,K,v)$ where $K$
is the underlying set of $K'\cup\{u\}$.
\end{thm}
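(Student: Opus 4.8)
The plan is to split the statement into two independent halves: first that $\mathcal{P}$ is a $(G,\underline{u}K' \cup \{u\},u)$-PDF, and then that its development under $G$ resolves the linear space produced by Lemma~\ref{DF->LS}.

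For the PDF half I would begin with the partition property. Fixing $B\in\mathcal{F}$, the family $\{B+h : h\in H\}$ has flatten $\bigcup_{b\in B}(b+H)$, a union of left cosets of $H$; running over all $B\in\mathcal{F}$ and invoking resolvability (the flatten of $\mathcal{F}$ is a transversal of the non-trivial left cosets), these left cosets are precisely the non-trivial ones, each occurring once. Hence $\{B+h : B\in\mathcal{F}, h\in H\}$ tiles $G\setminus H$ without repetition, and adjoining $H$ gives a partition of $G$; in particular the $B+h$ are pairwise disjoint, so there are $u|\mathcal{F}|+1$ blocks with size multiset $\underline{u}K' \cup \{u\}$. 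For the index I would use translation invariance $\Delta(B+h)=\Delta B$ to get $\Delta\{B+h : B\in\mathcal{F}, h\in H\} = u\,\Delta\mathcal{F} = u\,(G\setminus H)$, together with the elementary identity $\Delta H = u\,(H\setminus\{0\})$ for a subgroup of order $u$; summing yields $\Delta\mathcal{P}=u\,(G\setminus\{0\})$, so $\mathcal{P}$ has index $u$ and is a PDF.

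For the harmonious linear space I would apply Lemma~\ref{DF->LS} to obtain the S$(2,K,v)$ with block set $\mathcal{B} = \{B+g : B\in\mathcal{F}, g\in G\} \cup \{\text{right cosets of }H\}$, on which $G$ acts by right translation as an automorphism group. I would then propose $\mathcal{R}=\{\mathcal{P}+g : g\in G\}$ as the resolution. Each $\mathcal{P}+g$ is made of design blocks (translates of the $B$'s and the right coset $H+g$) and partitions the points, so it is a parallel class; since right translation permutes $\mathcal{R}$, we get $G\le\mathrm{Aut}(V,\mathcal{B},\mathcal{R})$ acting sharply transitively on points and transitively on $\mathcal{R}$. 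What remains is to check that $\mathcal{R}$ genuinely resolves $\mathcal{B}$, i.e. that every block lies in exactly one parallel class.

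This last verification is the main obstacle. Each block lies in at least one class ($B+g\in\mathcal{P}+g$ and $H+g\in\mathcal{P}+g$), so it suffices to match cardinalities. I would first show the stabilizer of $\mathcal{P}$ in $G$ is exactly $H$: clearly $\mathcal{P}+h=\mathcal{P}$ for $h\in H$, while $\mathcal{P}+g=\mathcal{P}$ forces $H+g$ to be a block of $\mathcal{P}$, and the only coset-block available is $H$ itself—no $B+h$ is a coset, since by the transversal property each $B$ meets any coset at most once—so $g\in H$. Thus there are exactly $v/u$ distinct parallel classes, each with $u|\mathcal{F}|+1$ blocks, i.e. $v|\mathcal{F}|+v/u$ incidences; a short pair count (using $\sum_{B}\binom{|B|}{2}=(v-u)/2$, which comes from index $1$) shows $\mathcal{B}$ has exactly $v|\mathcal{F}|$ translate-blocks and $v/u$ cosets, the same total, whence each block lies in exactly one class. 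This establishes the resolution and completes the proof that $\mathcal{P}$ is the base parallel class of a HS$(2,K,v)$.
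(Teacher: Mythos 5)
Your proposal follows the same skeleton as the paper's proof---the PDF property via the difference/flatten computation, then the identification of the stabilizer of $\mathcal{P}$ in $G$ with $H$, then development of $\mathcal{P}$ into a resolution of the linear space of Lemma~\ref{DF->LS}---but the two delicate steps are executed differently, and the comparison cuts both ways.

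For the stabilizer, the paper argues with difference lists: if $H+g=B+h$ then $\Delta H=\Delta B$, which is impossible unless both lists are empty, i.e.\ unless $H=\{0\}$; a second, separate argument (a block $B$ with $g\in\Delta B$ would be fixed by $g$, forcing $\Delta B\supseteq\Delta\langle g\rangle$ and contradicting index $1$) then eliminates that residual case. Your argument---$B+h$ meets every left coset of $H$ at most once by the transversal property, while a coset has all $u$ of its elements in a single coset---is shorter and perfectly sound when $u\ge 2$ and $G$ is abelian (or $H$ is normal, which covers every application in the paper). But it silently assumes exactly what the paper's second argument exists to handle: the statement allows $H=\{0\}$, and in that case every singleton is a coset, the transversal property says nothing, and ``no $B+h$ is a coset'' needs a different justification (cardinality, once one knows all blocks of $\mathcal{F}$ have size at least $2$). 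So there is a gap, albeit only in this degenerate case; it is precisely the case on which the paper spends half of its stabilizer argument. Note also that in a nonabelian group with $H$ non-normal a right coset $H+g$ need not lie inside one left coset, so the difference-based argument is the more robust of the two.

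At the last step the roles are reversed: the paper merely asserts that the translates of $\mathcal{P}$ form a resolution (``it is clear that\dots''), whereas you correctly isolate what must be checked---that no block lies in two classes---and close it by counting, using that the $v|\mathcal{F}|$ translates $B+g$ are pairwise distinct. Your pair count does prove this distinctness: writing $m_A$ for the number of pairs $(B,g)$ producing a given design block $A$, the equality of $\sum_A\binom{|A|}{2}$ with $v\sum_{B\in\mathcal{F}}\binom{|B|}{2}$ forces $\sum_A(m_A-1)\binom{|A|}{2}=0$, hence $m_A=1$ whenever $|A|\ge 2$. So here your write-up is actually more complete than the paper's. Be aware, though, that both your count and the theorem itself fail if $\mathcal{F}$ were allowed to contain singleton blocks (distinct pairs $(B,g)$ then yield equal translates and the classes genuinely overlap), so the entire statement implicitly presupposes that all block sizes, including $u$, are at least $2$; under that reading your argument, patched at $u=1$ as above, is correct.
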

\begin{proof}
We have $\Delta{\cal F}=G\setminus H$ by definition of a $(G,H,K',1)$-DF.
Of course, for any $B\in {\cal F}$ and any $h\in H$ we have $\Delta(B+h)=\Delta B$.
Hence the list of differences of $\{B+h \ | \ B\in{\cal F},h\in H\}$ is $|H|$ times $G\setminus H$:
$$\Delta\{B+h \ | \ B\in{\cal F},h\in H\}=\underline{u}(G\setminus H).$$
Also, it is evident that $\Delta H=\underline{u}(H\setminus\{0\})$. The last two equalities give
$\Delta{\cal P}=\underline{u}(G\setminus\{0\})$, i.e., ${\cal P}$ is a DF in $G$ of index $u$.

The fact that $\cal F$ is resolvable implies that the flatten of $\{B+h \ | \ B\in{\cal F},h\in H\}$
is precisely $G\setminus H$ and then the flatten of $\cal P$ is precisely $G$. We conclude that ${\cal P}$ is
a $(G,\underline{u}K' \ \cup \ \{u\},u)$-PDF.

Let $G_{\cal P}$ be the stabilizer of $\cal P$ under the natural right action of $G$.
It is obvious that $H\subset G_{\cal P}$.
Let us prove the reverse inclusion. Assume that $g\in G_{\cal P}\setminus H$. In this case
$H+g\in{\cal P}$ (since $H$ is a member of $\cal P$) but $H+g\neq H$. Thus we have $H+g=B+h$ for a suitable pair $(B,h)\in{\cal F}\times H$.
It follows that $\Delta(H+g)=\Delta(B+h)$, hence $\Delta H=\Delta B$. Considering that $\Delta B$ is disjoint with 
$H$ and that every element of $\Delta H$ is in $H$, we infer that $H=\{0\}$, that
is the case where $\Delta H$ is empty. So $\cal P$ is a PDF of index 1.
Let $B=\{b,b+g,\dots\}$ be the block of $\cal F$ such that $g\in\Delta B$. 
We have $B+g\in{\cal P}$ since $g\in G_{\cal P}$ and we see that $b+g\in B\cap (B+g)$ so that
$B+g=B$ since the blocks of $\cal P$ are pairwise disjoint.
Hence $g$ stabilizes $B$ which means that $B$ is a union of right cosets of
$\langle g\rangle$ in $G$. It follows that $\Delta B$ contains $\Delta \langle g\rangle$ which is $o(g)$ times all
the elements of $\langle g\rangle\setminus\{0\}$. This contradicts the fact that $\cal P$ has index 1.
We conclude that $G_{\cal P}=H$. 
Hence, if $S$ is a complete system of representatives for the right cosets of $H$ in $G$, 
the $G$-orbit of $\cal P$ is $\{{\cal P} + s \ | \ s\in S\}$. Now note that $\bigcup_{s\in S}({\cal P}+s)$
is precisely the block set of the S$(2,K,v)$ generated by $\cal F$ (see Lemma \ref{DF->LS}). Also, given 
that $\cal P$ is partitioned, it is clear that ${\cal P} + s$ is a parallel class of this S$(2,K,v)$ for every $s\in S$
and then $\{{\cal P} + s \ | \ s\in S\}$ is a resolution of it.
The assertion follows.
\end{proof}

\section{Harmonious strong difference families}

Uniform strong difference families have been implicitly used in the literature for many years
and they have been formally introduced for the first time in \cite{SDF}. Since then, they have
been explicitly used in many papers \cite{BBGRT,BN,BP,BYW,CCFW,CFW1,CFW2,GFW,Momihara,YYL}. Non-uniform SDFs have been used in \cite{HPDF} 
for the construction of the so-called {\it Hadamard} partitioned difference families also considered in \cite{N}. 
 
The following definition is crucial in this paper.

\begin{defn}
A $(G,K,\lambda)$-SDF is {\it harmonious} if its block sizes sum up to $\lambda$ 
(that is, if its flatten has size $\lambda$).
\end{defn}

A $(G,k,\lambda)$-SDF with $n$ blocks has flatten of size $kn$ and we have $\lambda|G|=k(k-1)n$. We deduce that a uniform $(G,k,\lambda)$-SDF 
is harmonious if and only if $|G|=k-1$. 

% A remarkable class of SDFs with only one block is given by the so-called difference multisets of the second type. 
% If $q\equiv3$ (mod 4) and $\F_q^\Box$ is the set of non-zero squares of $\F_q$, then the singleton $\{\underline{2}\F_q^\Box\}$ is a $(\F_q,q+1,q+1)$-SDF.

% There are infinitely many admissible pairs $(k_1,k_2)$ for harmonious SDFs with exactly two blocks, one of size $k_1$ and 
% the other of size $k_2$. For instance, given any positive integer $n$, the pair $(k_1,k_2)=(n^2-1,n(n^2-1))$ is admissible. 
% Indeed, if $G$ is a group of order $g=n^3-n^2+n-2$ and $k_1(k_1-1)+k_2(k_2-1)=(k_1+k_2)g$.

% Consider the following multisubsets of $G=\Z_{19}$:
% $$B_1=\{0,0,1,4,6,7,9,11\},\quad B_2=O_5 \ \cup \Z_{19}$$
% Then $\{B_1,B_2\}$ is a harmonious $(G,\{8,24\},32)$-SDF. 

\begin{ex}\label{classic}
If $G$ is any additive group of order $k-1$, then $\Sigma=\{O_k,(G\cup\{0\})^k\}$
is a harmonious $(G,k,k^2+k)$-SDF.
\end{ex}

\begin{ex}\label{9}
Let $G=\Z_9$. Then
$$\{0^3,1\}\quad \mbox{and} \quad \{0^3,1,2^2,3,4,5,6^2,7\}$$
are the blocks of a harmonious $(G,\{4,12\},16)$-SDF. Also,
$$O_3 \ \cup \ \{1,4,7\}\quad \mbox{and} \quad O_3 \ \cup \ G$$
are the blocks of a harmonious $(G,\{6,12\},18)$-SDF.
\end{ex}

\begin{ex}\label{4,14}
Let $G=\{0,1,a,b\}$ be one of the two groups of order $4$.
Then $$\{O_3, \ \{0^2,1,a,b\}, \ \{1^2,a^2,b^2\}\}$$ is a
harmonious $(G,\{3,5,6\},14)$-SDF and
$$\{O_3, \ \{1,a,b\}, \ \{0^3,1,a,b\}^2\}$$
is a harmonious $(G,\{3^2,6^2\},18)$-SDF.
\end{ex}

\begin{ex}\label{3,18}
Let $G$ be any group of order $k$.
Then $\Sigma=\{O_k^2, \ G^{2k-4}, \ \underline{2}G\}$ is a harmonious $(G,\{k^{2k-2},2k\},2k^2)$-SDF.
For instance, 
$$\{\{0,0,0\}, \ \{0,0,0\}, \ \{0,1,2\}, \  \{0,1,2\}, \  \{0,0,1,1,2,2\}\}$$
is a harmonious $(\Z_3,\{3^4,6\},18)$-SDF.
\end{ex}

In what follows, $\F_q$ and $\F_q^*$ will denote the finite field of order $q$ and its multiplicative
group, respectively. 

% If $d$ is a divisor of $q-1$, the subgroup of $\F_q^*$ of index $d$, that is
% the set of non-zero $d$-th powers of $\F_q$, will be denoted by $C^{(d,q)}$ or simply by $C^d$ if the prime power
% $q$ is understood.

\begin{defn}
Let $\Sigma$ be a $(G,K,\lambda)$-SDF and let $q\equiv1$ (mod $\lambda$) be a prime power.  
Lift each $B=\{b_1,\dots,b_k\}\in\Sigma$ to a subset $\ell(B)=\{(b_1,c_1),\dots,(b_k,c_k)\}$ 
of $G\times\F_q$ and set ${\cal L}=\{\ell(B) \ | \ B\in\Sigma\}$. 
By definition of a SDF the list of differences of $\cal L$ is of the form 
\begin{equation}\label{Delta}
\Delta{\cal L}=\bigcup_{g\in G}\{g\}\times \Delta_g
\end{equation} 
where each $\Delta_g$ is a list of $\lambda$ elements of $\F_q$.
Thus it makes sense to ask whether there exists a suitable ${q-1\over\lambda}$-subset $S$ of $\F_q$ such that
\begin{equation}\label{GoodLifting}
S\cdot\Delta_g=\F_q^*\quad\forall \ g\in G.
\end{equation} 
If the answer is in the affirmative, we say that $S$ is
a {\it good companion} for $\cal L$ and call $\cal L$ a {\it good lifting} of $\Sigma$.

Assume that $\Sigma$ is harmonious and that $\cal L$ is a good lifting of $\Sigma$.
By definition of a harmonious SDF, both the flatten of $\cal L$ and its projection $\pi({\cal L})$ on $\F_q$ have size $\lambda$.
Thus it may happen that there is a good companion $S$ for $\cal L$ satisfying the identity
\begin{equation}\label{PerfectLifting}
S\cdot\pi({\cal L})=\F_q^*.
\end{equation} 
In this case we say that $S$ is a {\it perfect companion} for $\cal L$ and call $\cal L$ a {\it perfect lifting} of $\Sigma$.
\end{defn}

If, for instance, each $\Delta_g$ and $\pi({\cal L})$ are complete systems 
of representatives for the cosets of the subgroup $S$ of $\F_q^*$ of index $\lambda$, then a 
perfect companion for $\cal L$ is just $S$.

\begin{thm}\label{PL->RDF}
Let $G$ be a group of order $g$, let $q=\lambda n+1$ be a prime power, and let
$\Sigma$ be a $(G,K,\lambda)$-SDF admitting a good lifting $\cal L$ to $G\times\F_q$.
Then there exists a $(G\times\F_q,G\times\{0\},\underline{n}K,1)$-DF which is resolvable
if $\Sigma$ is harmonious and $\cal L$ is perfect.
\end{thm}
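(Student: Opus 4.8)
The plan is to build the difference family by scaling the field coordinate of the good lifting through the elements of a good companion. Concretely, fix a good companion $S$ for ${\cal L}$, so $|S|=n$ and $S\cdot\Delta_g=\F_q^*$ for every $g\in G$. For $s\in\F_q^*$ and a subset $A=\{(x_1,y_1),\dots,(x_k,y_k)\}$ of $G\times\F_q$ write $s\ast A:=\{(x_1,sy_1),\dots,(x_k,sy_k)\}$; since $(x,y)\mapsto(x,sy)$ is a bijection of $G\times\F_q$, $s\ast A$ is again a genuine set of the same size as $A$. I would then define
$${\cal F}:=\{s\ast\ell(B)\ | \ s\in S,\ B\in\Sigma\}$$
and claim this is the asserted DF, with resolvability added under the stronger hypotheses.

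First I would compute the list of differences of ${\cal F}$, sorted by the $G$-coordinate. For a single block, $\Delta(s\ast\ell(B))=\{(b_i-b_j,\,s(c_i-c_j))\ | \ i\neq j\}$, so collecting over all $s\in S$ and all $B\in\Sigma$ and grouping by first coordinate $g$ replaces the second coordinates by $S\cdot\Delta_g$. By the good-companion property this is exactly $\F_q^*$, whence
$$\Delta{\cal F}=\bigcup_{g\in G}\{g\}\times\F_q^*=(G\times\F_q)\setminus(G\times\{0\}),$$
each element occurring precisely once. Thus ${\cal F}$ is a difference family in $G\times\F_q$ relative to $H:=G\times\{0\}$ of index $1$. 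Counting sizes, each $\ell(B)$ spawns $|S|=n$ blocks of size $|B|$, so the multiset of block sizes is $\underline{n}K$, giving the desired $(G\times\F_q,\,G\times\{0\},\,\underline{n}K,1)$-DF.

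It remains to prove resolvability when $\Sigma$ is harmonious and ${\cal L}$ is perfect. The cosets of $H=G\times\{0\}$ are the fibres $G\times\{c\}$, the trivial one being $c=0$, so I must show the flatten of ${\cal F}$ meets every fibre with $c\neq0$ exactly once and avoids the fibre $c=0$. The projection onto $\F_q$ of the flatten of ${\cal F}$ is precisely $S\cdot\pi({\cal L})$, and the perfect-companion identity $S\cdot\pi({\cal L})=\F_q^*$ says that these second coordinates run through $\F_q^*$ with multiplicity one; in particular $0\notin\pi({\cal L})$, so no point sits over $c=0$. Since $|S|\cdot|\pi({\cal L})|=n\lambda=q-1$ equals the number of non-trivial fibres, the flatten is a complete system of representatives for the non-trivial cosets of $H$, i.e.\ ${\cal F}$ is resolvable.

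The computations are light bookkeeping; the only real content is recognizing that the two companion conditions are tailored exactly to the two conclusions. The equation $S\cdot\Delta_g=\F_q^*$ is engineered so that, fibre by fibre, the rescaled differences tile $\F_q^*$ once — the index-$1$ condition — and it simultaneously forces $0\notin\Delta_g$, ruling out stray differences inside $H$. The perfect-companion equation instead makes the flatten (not the differences) tile $\F_q^*$ once, which is exactly resolvability. Hence the main thing to get right is the matching of multiplicities, namely $|S|=n$, $|\Delta_g|=\lambda$, and $|\pi({\cal L})|=\lambda$ by harmoniousness, rather than any delicate estimate; this is where I would expect the bookkeeping to demand the most care.
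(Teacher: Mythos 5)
Your proof is correct and follows essentially the same route as the paper's: you rescale the field coordinates of the good lifting by the elements of the companion $S$, use $S\cdot\Delta_g=\F_q^*$ to obtain the relative difference family of index $1$, and use $S\cdot\pi({\cal L})=\F_q^*$ to identify the flatten with a complete system of representatives for the non-trivial cosets of $G\times\{0\}$. The only differences are cosmetic, e.g.\ you are slightly more explicit than the paper in distinguishing the flatten from its projection and in matching the multiplicity count $|S|\cdot|\pi({\cal L})|=n\lambda=q-1$.
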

\begin{proof}
By definition of a good lifting there is a suitable good companion $S$ for $\cal L$ for which
(\ref{Delta}) and (\ref{GoodLifting}) hold. 
For each block $B=\{(b_1,c_1),\dots,(b_k,c_k)\}$ of ${\cal L}$,  and for each $s\in S$, 
set $B_s=\{(b_1,sc_1),\dots,(b_k,sc_k)\}$. Then set ${\cal F}=\{B_s \ | \ B\in{\cal L}; s\in S\}$. Note
that $\Delta B_s$ is obtainable from $\Delta B$ by multiplying all its elements by $s$. Thus (\ref{Delta}) and (\ref{GoodLifting}) give
$$\Delta{\cal F}=\bigcup_{g\in G}\{g\}\times (S\cdot\Delta_g)=\bigcup_{g\in G}\{g\}\times\F_q^*=(G\times\F_q)\setminus (G\times\{0\})$$
which means that $\cal F$ is a $(G\times\F_q,G\times\{0\},\underline{n}K,1)$-DF.

Now assume that $\Sigma$ is harmonious, $\cal L$ is perfect and $S$ is a perfect companion for $\cal L$. 
In this case the flatten of the difference family $\cal F$ constructed above is 
$S\cdot\pi({\cal L})=\F_q^*$ because (\ref{PerfectLifting}) holds.  
This means that the flatten of ${\cal F}$ is a complete system of representatives for the non-trivial cosets of
$G\times\{0\}$ in $G\times\F_q$, i.e., that $\cal F$ is resolvable.
\end{proof}

The above theorem was essentially applied in \cite{BWW} to construct some infinite series of non-uniform and non-resolvable
relative difference families starting from $(\Z_g,K,\lambda)$-SDFs with $\lambda=2$ or 4 and flatten of size $2g$.
Applying the theorem with the use of the harmonious SDFs of Example \ref{classic} one can obtain 
uniform and resolvable DF's.  This was done in \cite{BZ} implicitly (at that time the notion of a SDF had not yet been 
introduced) and explicitly in \cite{BYW}.

Now note that Theorem \ref{PL->RDF} and Theorem \ref{RDF->PDF} immediately give the following.

\begin{cor}\label{cor}
Assume that there exists a harmonious $(G,K',\lambda)$-SDF admitting a perfect lifting to $G\times\F_q$
with $G$ of order $g$ and $q=\lambda n+1$.
Then there exists a $(G\times\F_q,\underline{gn}K' \ \cup \ \{g\},g)$-PDF which is the base parallel class of a HS$(2,K,gq)$ with
$K$ the underlying set of $K' \ \cup \ \{g\}$.
\end{cor}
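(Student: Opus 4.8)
The plan is to do exactly what the sentence preceding the statement advertises, namely to chain Theorem \ref{PL->RDF} and Theorem \ref{RDF->PDF}. First I would feed the given harmonious $(G,K',\lambda)$-SDF $\Sigma$, together with its perfect lifting ${\cal L}$ to $G\times\F_q$, into Theorem \ref{PL->RDF}. Since $q=\lambda n+1$, that theorem produces a $(G\times\F_q,\,G\times\{0\},\,\underline{n}K',\,1)$-DF ${\cal F}$; and because we are assuming that $\Sigma$ is harmonious and that ${\cal L}$ is perfect, the very same theorem guarantees that ${\cal F}$ is resolvable. This is the only hypothesis-checking required at this stage, and it is supplied verbatim by the assumptions of the corollary.

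Next I would apply Theorem \ref{RDF->PDF} to the resolvable difference family ${\cal F}$, taking the ambient group to be $G\times\F_q$ (of order $gq$) and the distinguished subgroup to be $H=G\times\{0\}$ (of order $g$). Under these identifications the role of the symbol ``$K'$'' in Theorem \ref{RDF->PDF} is played by $\underline{n}K'$ and the role of ``$u$'' by $g$. The theorem then yields the partitioned difference family ${\cal P}=\{B+h \mid B\in{\cal F},\,h\in H\}\cup\{H\}$, asserts that it is a $(G\times\F_q,\,\underline{g}(\underline{n}K')\cup\{g\},\,g)$-PDF, and identifies it as the base parallel class of a HS$(2,K,gq)$ with $K$ the underlying set of $\underline{n}K'\cup\{g\}$.

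Finally I would reconcile the notation to match the stated conclusion. The two layers of block repetition compose cleanly: $\underline{g}(\underline{n}K')=\underline{gn}K'$, so the block-size multiset of ${\cal P}$ is precisely $\underline{gn}K'\cup\{g\}$, as claimed. Moreover, since the operation $\underline{n}$ only duplicates blocks and does not introduce new sizes, the underlying set of $\underline{n}K'$ coincides with the underlying set of $K'$; hence the underlying set of $\underline{n}K'\cup\{g\}$ equals the underlying set of $K'\cup\{g\}$, which is the $K$ in the statement. I expect no genuine obstacle here: the argument is entirely mechanical once the two theorems are invoked, and the only point demanding a moment's attention is this multiset bookkeeping—verifying that the $\underline{n}$ coming from the lifting step and the $\underline{g}$ coming from the coset-development step multiply to $\underline{gn}$, and that passing to underlying sets absorbs all the multiplicities.
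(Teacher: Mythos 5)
Your proof is correct and takes exactly the paper's route: the paper offers no separate argument for this corollary, stating only that Theorems \ref{PL->RDF} and \ref{RDF->PDF} ``immediately give'' it, which is precisely the chaining you carry out. Your only added content---checking that $\underline{g}(\underline{n}K')=\underline{gn}K'$ and that passing to underlying sets collapses the multiplicities so that $K$ is the underlying set of $K'\cup\{g\}$---is the bookkeeping the paper leaves implicit, and you have done it correctly.
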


\begin{ex}\label{2,2,4}
It is straightforward to check that $\Sigma=\{\{0,0\}, \ \{0,0\}, \ \{0,0,1,1\}\}$ is a $(\Z_2,\{2,2,4\},8)$-SDF.
We try to obtain a perfect lifting of $\Sigma$ to $\Z_2\times\F_{17}$ of the form
$${\cal L}=\bigl{\{}\{(0,a),(0,-a)\}, \ \{(0,b),(0,c)\}, \ \{(0,-b),(0,d),(1,-c),(1,-d)\}\bigl{\}}.$$ The list of differences of $\cal L$ is 
$$\Delta{\cal L}=(\{0\}\times\Delta_0) \ \cup \ (\{1\}\times\Delta_1)$$
with $\Delta_0=\{1,-1\}\cdot\Delta'_0$ and $\Delta_1=\{1,-1\}\cdot \Delta'_1$
where $\Delta'_0=\{2a,b-c,b+d,c-d\}$ and $\Delta'_1=\{2d,b-c,b-d,c+d\}$.
Also, the projection of the flatten of $\cal L$ on $\F_{17}$ is $\pi({\cal L})=\{1,-1\}\cdot\pi'$ with $\pi'=\{a,b,c,d\}$.

Let $C$ be the subgroup of $\F_{17}^*$ of index $4$, that is $C=\{1,4,13,16\}$ and assume that 
\begin{equation}\label{completesystems}
\mbox{$\Delta'_0$, $\Delta'_1$, $\pi'$ are complete systems of
representatives for the cosets of $C$ in $\F_{17}^*$}
\end{equation}
Thus we are assuming that $C\cdot \Delta'_0=C\cdot \Delta'_1=C\cdot\pi'=\F_{17}^*$. 
Observing that $C=S\cdot\{1,-1\}$ with $S=\{1,4\}$,
we see that $S$ is a perfect companion for $\cal L$, i.e., conditions (\ref{GoodLifting}) and (\ref{PerfectLifting}) hold.
Indeed we have
$$S\cdot\Delta_i=S\cdot\{1,-1\}\cdot\Delta'_i=C\cdot\Delta'_i=\F_{17}^*\quad\mbox{for $i=1, 2$};$$
$$S\cdot\pi({\cal L})=S\cdot\{1,-1\}\cdot\pi'=C\cdot\pi'=\F_{17}^*$$

Looking at the cosets of $C$ in $\F_{17}^*$,
$$C=\{1,4,13,16\},\quad2C=\{2, 8, 9,15\},\quad
3C=\{3, 12, 5, 14\},\quad6C=\{6, 7, 10, 11\},$$
it is readily seen that $(2,3,6,4)$ is a quadruple satisfying (\ref{completesystems}). 
Thus
$${\cal L}=\bigl{\{}\{(0,2),(0,15)\}, \ \{(0,3),(0,6)\}, \ \{(0,14),(0,4),(1,11),(1,13)\}\bigl{\}}$$
is a perfect lifting of $\Sigma$ to $\Z_2\times\F_{17}$ with perfect companion $S=\{1,4\}$.
Then, following the instructions of Theorem \ref{PL->RDF}, we get a resolvable $(\Z_2\times\F_{17},\Z_2\times\{0\},\{2^4,4^2\},1)$-DF,
that is ${\cal F}={\cal L} \ \cup \ {\cal L}'$ where ${\cal L}'$ is obtainable from $\cal L$ by multiplying the second coordinates of the elements of all its blocks by 4:
$${\cal L}'=\bigl{\{}\{(0,8),(0,9)\}, \ \{(0,12),(0,7)\}, \ \{(0,5),(0,16),(1,10),(1,1)\}\bigl{\}}$$

\normalsize
Let us identify $\Z_2\times\F_{17}$ with $\Z_{34}$ via the isomorphism (given by the Chinese remainder 
theorem) mapping every $(x,y)$ of $\Z_2\times\F_{17}$ to the element $17x+18y$ of $\Z_{34}$.
Then $\cal F$ can be seen as a resolvable $(\Z_{34},\{0,17\},\{2^4,4^2\},1)$-DF whose blocks are the following:
$$\{2,32\}, \ \{20,6\}, \ \{14,4,11,13\}, \ \{8,26\}, \ \{12,24\}, \ \{22,16,27,1\}$$
Now, by Theorem \ref{RDF->PDF}, we can say that ${\cal P}={\cal F} \ \cup \ \{B+17 \ | \ B\in{\cal F}\} \ \cup \ \{\{0,17\}\}$ is 
a $(\Z_{34},\{2^9,4^4\},2)$-PDF and $\{{\cal P}+i \ | \ 0\leq i\leq 16\}$ is the set of parallel classes of a HS$(2,\{2,4\},34)$
with $153$ blocks of size $2$ and $68$ blocks of size $4$.
%\scriptsize
%$${\cal P}=[2,32] [20,6] [14,4,11,13] [8,26] [12,24] [22,16,27,1][19,15][3,23] [31,21,28,30] [25,9] [29,7] [5,33,10,18][0,17]$$
\end{ex}

\section{Composition constructions}
It is worth observing that any perfect lifting of a harmonious SDF automatically gives an
infinite series of harmonious linear spaces. Indeed the following result, in the same spirit of Theorem 1 in \cite{W}, holds.

\begin{prop}\label{q->q^k}
If a harmonious SDF in $G$ admits a perfect lifting to $G\times\F_q$, then it also 
admits a perfect lifting to $G\times\F_{q^k}$ for every integer $k$.
\end{prop}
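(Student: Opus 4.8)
The plan is to reuse the given perfect lifting verbatim and merely enlarge the companion set by passing through a transversal of the field extension. Since $\F_q$ is a subfield of $\F_{q^k}$, the lifting $\cal L$ of $\Sigma$ to $G\times\F_q$ can be regarded without any change as a collection of subsets of $G\times\F_{q^k}$: the second coordinates are subtracted by the same field operation, so each list $\Delta_g$ occurring in (\ref{Delta}) and the projection $\pi({\cal L})$ remain literally the same multisets, now viewed inside $\F_q^*\subseteq\F_{q^k}^*$. Observe also that $q\equiv1\pmod\lambda$ forces $q^k\equiv1\pmod\lambda$, so that a lifting to $G\times\F_{q^k}$ is admissible in the sense of the definition.

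Next I would fix a perfect companion $S$ for $\cal L$ over $\F_q$, so that $|S|=(q-1)/\lambda$ and both (\ref{GoodLifting}) and (\ref{PerfectLifting}) hold over $\F_q$, and I would choose a complete system $T$ of representatives for the cosets of $\F_q^*$ in $\F_{q^k}^*$, so that $|T|=(q^k-1)/(q-1)$ and $T\cdot\F_q^*=\F_{q^k}^*$ with each element covered exactly once. The candidate companion over $\F_{q^k}$ is then $S'=T\cdot S$. The products $ts$ with $t\in T$ and $s\in S$ are pairwise distinct, for if $t_1s_1=t_2s_2$ then $t_1t_2^{-1}=s_2s_1^{-1}\in\F_q^*$, whence $t_1$ and $t_2$ lie in the same coset and thus coincide. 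Consequently $|S'|=|T|\,|S|=(q^k-1)/\lambda$, exactly the cardinality required of a companion over $\F_{q^k}$.

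It then remains to verify the two multiplicative identities. Using associativity of the multiset product in the group $\F_{q^k}^*$ together with (\ref{GoodLifting}) over $\F_q$, for every $g\in G$ one obtains
$$S'\cdot\Delta_g=T\cdot(S\cdot\Delta_g)=T\cdot\F_q^*=\F_{q^k}^*,$$
so (\ref{GoodLifting}) holds over $\F_{q^k}$ and $S'$ is a good companion; likewise (\ref{PerfectLifting}) over $\F_q$ gives
$$S'\cdot\pi({\cal L})=T\cdot(S\cdot\pi({\cal L}))=T\cdot\F_q^*=\F_{q^k}^*,$$
which is precisely (\ref{PerfectLifting}) over $\F_{q^k}$. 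Hence $S'$ is a perfect companion and $\cal L$, reread inside $G\times\F_{q^k}$, is a perfect lifting of $\Sigma$.

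The only delicate point—and the natural candidate for the main obstacle—is the cardinality bookkeeping: one must confirm that the products in $T\cdot S$ do not collapse and that $T\cdot\F_q^*$ sweeps out $\F_{q^k}^*$ exactly once, so that $S'$ genuinely has the prescribed size $(q^k-1)/\lambda$ rather than merely satisfying the covering equalities. Both facts follow at once from $T$ being a transversal of $\F_q^*$, so beyond this verification no real difficulty arises.
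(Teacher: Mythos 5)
Your proof is correct, and it takes a route that is dual to the paper's rather than identical to it. Both arguments hinge on the same key object --- a transversal $T$ of $\F_q^*$ in $\F_{q^k}^*$, together with the identity $\F_q^*\cdot T=\F_{q^k}^*$ --- and on the same computation $S\cdot\Delta_g\cdot T=\F_q^*\cdot T=\F_{q^k}^*$ (and likewise for $\pi$). The difference is where $T$ gets absorbed. The paper absorbs it into the \emph{lifting}: it forms ${\cal L}'=\{L_t \ | \ L\in{\cal L},\ t\in T\}$ by scaling the second coordinates of the blocks, and keeps the old companion $S$; note that ${\cal L}'$ then has $|T|$ lifted blocks per block of $\Sigma$, so it is a lifting of $\Sigma$ only in a loosened sense (strictly, it is a perfect lifting of the $\underline{|T|}$-fold multiset union of copies of $\Sigma$, whose index is $|T|\lambda$). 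You instead absorb $T$ into the \emph{companion}: you keep ${\cal L}$ verbatim, re-read inside $G\times\F_{q^k}$, and take $S'=T\cdot S$. What your version buys is strict conformity with the paper's definitions: the lifting still has exactly one block per block of $\Sigma$, each $\Delta_g$ still has $\lambda$ entries, and the companion has exactly the prescribed cardinality $\frac{q^k-1}{\lambda}$ --- the price being the distinctness check $|T\cdot S|=|T|\,|S|$, which you carry out correctly (using that $0\notin S$, forced by (\ref{GoodLifting})). What the paper's version buys is that no cardinality verification is needed at all, since its companion is unchanged. Downstream the two are interchangeable: feeding either object into Theorem \ref{PL->RDF} produces the same resolvable $(G\times\F_{q^k},G\times\{0\},\cdot,1)$-DF, so Corollary \ref{cor} and the explicit constructions are unaffected by which bookkeeping one adopts.
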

\begin{proof}
Assume that $\Sigma$ is a harmonious SDF in $G$ admitting a perfect lifting 
${\cal L}$ to $G\times\F_q$. Thus, using the same notation as in the previous section, we have
$$\Delta{\cal L}=\bigcup_{g\in G}\{g\}\times \Delta_g,\quad S\cdot\Delta_g=\F_q^* \ \ \forall g\in G,\quad{\rm and}\quad S\cdot \pi({\cal L})=\F_q^*$$
where $S$ is a perfect companion for $\cal L$.

Take a complete system $T$ of representatives for the cosets of $\F_q^*$ in $\F_{q^k}^*$ so that we have $\F_q^*\cdot T=\F_{q^k}^*$.
Then, for every block $L\in\cal L$ and any $t\in T$, let $L_t$ be the subset of $G\times \F_{q^k}^*$
obtainable from $L$ by multiplying the second coordinates of all its elements by $t$. Now set
${\cal L}'=\{L_t \ | \ L\in{\cal L}, t\in T\}$. Clearly, ${\cal L}'$ is a lifting of $\Sigma$ to $G\times\F_{q^k}$. 
Also, we have $\Delta{\cal L}'=\bigcup_{g\in G}\{g\}\times \Delta'_g$ with $\Delta'_g=\Delta_g\cdot T$ for every $g\in G$,
and $\pi({\cal L}')=\pi({\cal L})\cdot T$. Thus we have:
$$S\cdot\Delta'_g=S\cdot \Delta_g\cdot T=\F_q^*\cdot T=\F_{q^k}^* \ \ \forall g\in G\quad{\rm and}\quad 
S\cdot \pi({\cal L}')=S\cdot \pi({\cal L})\cdot T=\F_q^*\cdot T=\F_{q^k}^*$$
which means that $S$ is a perfect companion for ${\cal L}'$, i.e., ${\cal L}'$ is perfect.
\end{proof}

Applying Proposition \ref{q->q^k} together with Example \ref{2,2,4} we get a HS$(2,\{2,4\},2\cdot17^k)$ for every $k$. 

Now we show a composition construction making use of {\it difference matrices} (see, e.g., \cite{BJL} or \cite{C}).
A $(k\times v)$-matrix with entries in an additive group $G$ of order $v$ such that the difference of any two distinct rows
is a permutation of $G$ is said to be a $(G,k,1)$ difference matrix (DM for short). It is called  {\it homogeneous} if each row is also a permutation of $G$.
Adapting the construction \cite{J} for ordinary difference families by the second author we have the following.

\begin{thm}\label{composition}
Let $X$, $Y$ be two groups and let $Z=X\times Y$. If there exist a resolvable $(G\times X,G\times\{0\},K_1,1)$-DF,
a resolvable $(G\times Y,G\times\{0\},K_2,1)$-DF and a homogeneous $(Y,\max(K_1),1)$-DM, then there exists
a resolvable $(G\times Z,G\times\{0\},K,1)$-DF with $K=\underline{|Y|}K_1 \ \cup \ K_2$.
\end{thm}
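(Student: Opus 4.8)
The plan is to mimic the Jungnickel composition \cite{J} by ``inflating'' the blocks of the first family along the columns of the difference matrix, and then adjoining a suitably embedded copy of the second family. Write $G\times Z=G\times X\times Y$ and take the relative subgroup to be $G\times\{0\}$ with $0=(0_X,0_Y)$. Let ${\cal F}_1$ be the resolvable $(G\times X,G\times\{0\},K_1,1)$-DF, let ${\cal F}_2$ be the resolvable $(G\times Y,G\times\{0\},K_2,1)$-DF, and let $M=(m_{jc})$ be the homogeneous $(Y,\max(K_1),1)$-DM, its rows indexed by $1\le j\le\max(K_1)$ and its columns $c$ running through an index set of size $|Y|$. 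For a block $B=\{\beta_1,\dots,\beta_k\}$ of ${\cal F}_1$ (with $\beta_j\in G\times X$) and a column $c$ I set $B^{(c)}=\{(\beta_j,m_{jc}):1\le j\le k\}$, using only the first $k$ rows of $M$; here $(\beta_j,m_{jc})\in G\times X\times Y$ is obtained by appending the $Y$-coordinate $m_{jc}$. For a block $C$ of ${\cal F}_2$ I embed it as $\widehat C=\{(h,0_X,y):(h,y)\in C\}$. The candidate family is ${\cal F}=\{B^{(c)}:B\in{\cal F}_1,\ c\}\cup\{\widehat C:C\in{\cal F}_2\}$.

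Next I would verify that ${\cal F}$ has index $1$ by splitting the target $(G\times X\times Y)\setminus(G\times\{0\})$ into the elements $(g,x,y)$ with $x\ne 0_X$ (``type A'') and those with $x=0_X,\ y\ne 0_Y$ (``type B''). For a fixed pair $j\ne j'$ in a block $B$, the within-block difference from $B^{(c)}$ equals $(\beta_j-\beta_{j'},\,m_{jc}-m_{j'c})$; as $c$ ranges over all columns the second coordinate runs through all of $Y$ exactly once, because rows $j$ and $j'$ of $M$ differ by a permutation of $Y$. Summing over all blocks and columns therefore yields $\Delta{\cal F}_1\times Y=((G\times X)\setminus(G\times\{0_X\}))\times Y$, which is precisely the multiset of type-A elements, each once. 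On the other hand $\Delta\widehat C$ records exactly the differences of $C$ with an inserted zero $X$-coordinate, so $\bigcup_C\Delta\widehat C$ is the image of $\Delta{\cal F}_2=(G\times Y)\setminus(G\times\{0_Y\})$ under $(h,y)\mapsto(h,0_X,y)$, i.e.\ the type-B elements, each once. Hence $\Delta{\cal F}=(G\times Z)\setminus(G\times\{0\})$ with index $1$. The block sizes are immediate: each block $B$ of ${\cal F}_1$ gives $|Y|$ blocks $B^{(c)}$ of the same size, contributing $\underline{|Y|}K_1$, and the $\widehat C$ contribute $K_2$, so the size multiset is $K=\underline{|Y|}K_1\cup K_2$.

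The main point, and the step where homogeneity is indispensable, is resolvability: I must show the flatten of ${\cal F}$ is a complete system of representatives for the non-trivial cosets of $G\times\{0\}$, which are indexed by $(x,y)\in Z\setminus\{0\}$ via $G\times\{(x,y)\}$. Since ${\cal F}_1$ is resolvable, its flatten meets each coset $G\times\{x\}$ with $x\ne 0_X$ in exactly one point $\beta=(g_x,x)$, sitting in a unique block at a unique position $j$; appending the entries of the $j$-th row as $c$ varies sends this single point to the points $(g_x,x,y)$ with $y$ running once through all of $Y$, precisely because row $j$ is a permutation of $Y$ (homogeneity). Thus the flatten of the $B^{(c)}$ hits each coset $G\times\{(x,y)\}$ with $x\ne 0_X$ exactly once. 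Likewise, resolvability of ${\cal F}_2$ makes the flatten of the $\widehat C$ meet each coset $G\times\{(0_X,y)\}$ with $y\ne 0_Y$ exactly once. These two families of cosets are disjoint and, by the count $(|X|-1)|Y|+(|Y|-1)=|Z|-1$, exhaust the non-trivial cosets, so every one is represented exactly once and ${\cal F}$ is resolvable. The only bookkeeping to be careful with is confirming that the two contributions neither overlap nor leave a gap, which this count settles.
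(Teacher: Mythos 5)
Your construction is exactly the paper's: inflate each block of the first family along the columns of the homogeneous difference matrix, embed the second family with zero $X$-coordinate, and take the union; the paper states this construction and leaves the verification as ``easy to check,'' whereas you carry out the checks (index $1$ via the type A/type B split, block sizes, and resolvability via homogeneity plus the coset count) explicitly and correctly. No gaps.
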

\begin{proof}
For each block $B=\{(g_1,x_1),\dots,(g_k,x_k)\}$
of a resolvable $(G\times X,G\times\{0\},K_1,1)$-DF, say ${\cal F}_X$, and for each column
$(m_{1c},\dots,m_{kc})^T$ of a homogeneous $(Y,\max(K_1),1)$-DM, say $M=(m_{rc})$,
set $$B\circ M^c=\{(g_1,x_1,m_{1c}),\dots,(g_k,x_k,m_{kc})\}.$$
Also, for each block $B=\{(g_1,y_1),\dots,(g_k,y_k)\}$ of a resolvable $(G\times Y,G\times\{0\},K_2,1)$-DF, 
say ${\cal F}_Y$, set 
$\overline{B}=\{(g_1,0,y_1),\dots,(g_k,0,y_k)\}$.
It is easy to check that
$$\{B\circ M^c \ | \ B\in{\cal F}_X; 1\leq c\leq |Y|\} \ \cup \ \{\overline{B} \ | \ B\in{\cal F}_Y\}$$
is a resolvable $(G\times Z,K,1)$-DF.
\end{proof}

\begin{rem}\label{rem}
In view of Theorem \ref{RDF->PDF},
with the same hypotheses as in Theorem \ref{composition}, we see that 
there also exists a $(G\times Z,\underline{|Z|}K \ \cup \ \{|Z|\},|Z|)$-PDF
and hence a HS$(2,K',v)$ with $K'$
the underlying set of $K \ \cup \ \{|Z|\}$ and $v=|G|\cdot|X|\cdot|Y|$.
\end{rem}

\section{Cyclotomy}
Here we show how to use cyclotomy for getting a perfect lifting of a harmonious SDF. 
Given a prime power $q\equiv1$ (mod $e$), the subgroup of $\F_q^*$ of index $e$ -- that is the group
of non-zero $e$-th powers of $\F_q$ -- will be denoted by $C^e$.
If $r$ is a primitive element of $\F_q$, then the set of cosets of $C^e$ in $\F_q^*$ is $\{r^iC^e \ | \ 0\leq i\leq e-1\}$.
As usual, the coset $r^iC^e$ (called {\it the $i$-th cyclotomic class of order $e$}) will be denoted by $C_i^e$.
It is evident that an $e$-subset $S$ of $\F_q^*$ is a complete system of representatives for the cosets of $C^e$ in $\F_q^*$ if and only
if there is an ordering $(s_0,s_1,\dots,s_{e-1})$ of its elements such that $s_i\in C_i^e$ for each $i$.

From now on, given two positive integers $e$, $n$, we set
$$U=\sum_{h=1}^n{n\choose h}(e-1)^h(h-1)\quad{\rm and}\quad Q(e,n)={1\over4}(U+\sqrt{U^2+4ne^{n-1}})^2.$$ 
We will need the following result deriving from the theorem of Weil on multiplicative character sums (see Theorem 2.2 in \cite{BP}).

\begin{thm}\label{BP}
Let $q\equiv1$ $($mod $e)$ be a prime power, let $\{c_0,c_1,\dots, c_{n-1}\}$ be a $n$-subset of $\F_q$ and
let $(\alpha_0,\alpha_1,\dots,\alpha_{n-1})$ be a $n$-tuple of elements of $\Z_e$. 
Then the set
$$X=\{x\in \F_q \ : \ x-c_j\in C_{\alpha_j}^e \ \mbox{for }  0\leq j\leq n-1\}$$ is non-empty if $q>Q(e,n)$.
\end{thm}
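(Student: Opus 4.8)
The plan is to estimate $N:=|X|$ by a standard multiplicative--character argument and to show it is positive under the stated bound. Fix a primitive element $r$ of $\F_q$ and a character $\chi$ of $\F_q^*$ of order exactly $e$, extended by $\chi(0)=0$, and let $\zeta=\chi(r)$ be the associated primitive $e$-th root of unity. The point of departure is the orthogonality relation
\[
\frac1e\sum_{i=0}^{e-1}\zeta^{-i\alpha}\chi^i(y)=
\begin{cases}1 & \text{if } y\in C^e_\alpha,\\ 0 & \text{otherwise,}\end{cases}
\]
which, with the convention $\chi^i(0)=0$, holds for every $y\in\F_q$ (for $y=0$ all terms vanish, consistent with $0\notin C^e_\alpha$). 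Inserting this indicator for each condition $x-c_j\in C^e_{\alpha_j}$ gives
\[
N=\sum_{x\in\F_q}\prod_{j=0}^{n-1}\Bigl(\frac1e\sum_{i_j=0}^{e-1}\zeta^{-i_j\alpha_j}\chi^{i_j}(x-c_j)\Bigr).
\]

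Next I would expand the product over the index tuples $\mathbf i=(i_0,\dots,i_{n-1})\in\Z_e^{\,n}$, so that
\[
e^nN=\sum_{\mathbf i\in\Z_e^{\,n}}\Bigl(\prod_j\zeta^{-i_j\alpha_j}\Bigr)\,W(\mathbf i),\qquad
W(\mathbf i)=\sum_{x\in\F_q}\prod_{j}\chi^{i_j}(x-c_j).
\]
The tuple $\mathbf i=\mathbf 0$ gives the main term $W(\mathbf 0)=q-n$, since the integrand equals $1$ precisely for the $q-n$ values of $x$ different from all $c_j$. For a tuple with $h\ge1$ nonzero entries, the product equals $\chi\bigl(g(x)\bigr)$ away from the points $c_j$, where $g(x)=\prod_{i_j\neq0}(x-c_j)^{i_j}$; since each active exponent lies in $\{1,\dots,e-1\}$ and so is not divisible by $e$, the polynomial $g$ is not a perfect $e$-th power, hence the theorem of Weil on multiplicative character sums applies and yields $\bigl|\sum_{x}\chi(g(x))\bigr|\le(h-1)\sqrt q$. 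Correcting for the $n-h$ boundary points $c_j$ with $i_j=0$, at each of which the product vanishes while $\chi(g(c_j))$ need not, I obtain $|W(\mathbf i)|\le(h-1)\sqrt q+(n-h)$.

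Finally I would sum these estimates over all nonzero tuples, grouping by the number $h$ of nonzero entries; as there are $\binom nh(e-1)^h$ such tuples,
\[
\sum_{\mathbf i\neq\mathbf 0}|W(\mathbf i)|\le U\sqrt q+\sum_{h=1}^n\binom nh(e-1)^h(n-h),
\]
with $U$ as defined just before the statement. The binomial identity $\sum_{h=0}^n\binom nh(e-1)^h(n-h)=n\,e^{\,n-1}$ converts the last sum into $n e^{\,n-1}-n$, so that
\[
e^nN\ge(q-n)-\sum_{\mathbf i\neq\mathbf 0}|W(\mathbf i)|\ge q-U\sqrt q-n\,e^{\,n-1}.
\]
Regarding the right-hand side as a quadratic in $\sqrt q$, it is strictly positive exactly when $\sqrt q>\tfrac12\bigl(U+\sqrt{U^2+4n e^{\,n-1}}\bigr)$, i.e. when $q>Q(e,n)$, which forces $N>0$ and proves the claim. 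I expect the main obstacle to be the middle step: verifying that $g$ is never a perfect $e$-th power so that Weil's nontrivial bound is legitimate, and tracking the $n-h$ boundary corrections accurately, since it is precisely their combination with the main-term deficit $-n$ that produces the constant $n e^{\,n-1}$ occurring in $Q(e,n)$.
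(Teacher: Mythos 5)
Your proof is correct and follows essentially the same route as the source of this result: the paper does not prove Theorem \ref{BP} itself but quotes it as Theorem 2.2 of \cite{BP}, where it is obtained by precisely this expansion of the cyclotomic indicator functions into multiplicative character sums, the Weil bound $(h-1)\sqrt{q}$ for the tuples with $h$ nonzero exponents (legitimate since the multiplicities $i_j\in\{1,\dots,e-1\}$ prevent $g$ from being an $e$-th power), and the boundary corrections $n-h$. The very shape of $Q(e,n)$ --- with $U=\sum_{h=1}^{n}\binom{n}{h}(e-1)^h(h-1)$ collecting the Weil terms and $ne^{n-1}$ arising from the identity $\sum_{h}\binom{n}{h}(e-1)^h(n-h)=ne^{n-1}$ combined with the main-term deficit $-n$ --- confirms that your reconstruction matches the intended derivation.
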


In other words, whenever a prime power $q\equiv1$ (mod $e$)
is greater than $Q(e,n)$, the existence of an element $x\in \F_q$ satisfying $n$ cyclotomic conditions of the
form $x-c_j\in C^e_{\alpha_j}$ is guaranteed provided that the $c_j$'s are pairwise distinct.

The following theorem is a variation of Theorem 7.2 in \cite{BN}.

% As a special consequence of Theorem 5.1 in \cite{BP}, proved with the use of Theorem \ref{BP}, it is possible to see that every $(G,K,\lambda)$-SDF 
% admits a good lifting to $G\times \F_q$ for all prime powers $q\equiv\lambda+1$ $($mod $2\lambda)$ sufficiently large.
% Here, again with the essential tool of Theorem \ref{BP}, we obtain the following.
\begin{lem}\label{weil}
Every harmonious $(G,K,\lambda)$-SDF admits a perfect lifting to $G\times\F_q$ 
for all prime powers $q\equiv\lambda+1$ $($mod $2\lambda)$ greater than $Q(\lambda,\max(K))$.
\end{lem}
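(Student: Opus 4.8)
The plan is to take the perfect companion to be $S=C^\lambda$, the subgroup of index $\lambda$ in $\F_q^*$, and to choose the second coordinates of the lift so that, for every $g\in G$, the list $\Delta_g$ is a complete system of representatives for the cosets of $C^\lambda$, and so is the projection $\pi(\mathcal{L})$ of the flatten. As observed immediately after the definition of a perfect companion, this makes $C^\lambda$ a perfect companion (note that $|C^\lambda|=(q-1)/\lambda$, the required cardinality), hence $\mathcal{L}$ a perfect lifting, which is exactly the assertion. Recall that for a SDF of index $\lambda$ each $g\in G$ (including $g=0$) is covered exactly $\lambda$ times by $\Delta\Sigma$, so each $\Delta_g$ indeed has $\lambda$ entries, matching the $\lambda$ cosets to be hit.

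Two consequences of the congruence $q\equiv\lambda+1\ (\mathrm{mod}\ 2\lambda)$ enter. First, it yields $q\equiv1\ (\mathrm{mod}\ \lambda)$, so the classes $C^\lambda_0,\dots,C^\lambda_{\lambda-1}$ are defined. Second, writing $q-1=\lambda m$ it forces $m$ odd; since $\lambda$ is even (the index of a SDF being even), a short computation gives $-1\in C^\lambda_{\lambda/2}$. Thus negation acts on the cosets of $C^\lambda$ as the fixed-point-free involution $C^\lambda_\alpha\mapsto C^\lambda_{\alpha+\lambda/2}$, and this is the key structural fact I will use to control the ``diagonal'' list $\Delta_0$ and to halve the number of conditions via the identity $\Delta_{-g}=-\Delta_g$.

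The realisation proceeds by fixing in advance the class each difference should occupy and then solving one coordinate at a time with Theorem \ref{BP}. Since $\Delta_{-g}=-\Delta_g$, it suffices to force $\Delta_g$ to be a complete system for one $g$ in each pair $\{g,-g\}$; for such $g$ I choose any bijection from the $\lambda$ within-block ordered pairs feeding $\Delta_g$ onto the $\lambda$ classes. When a pair and its reverse feed the \emph{same} list (precisely when $2g=0$, in particular for $g=0$) I prescribe only one pair of each reverse couple and let negation place its reverse in $C^\lambda_{\alpha+\lambda/2}$; this is exactly where $-1\in C^\lambda_{\lambda/2}$ is indispensable, since otherwise a class and its negative could coincide and $\Delta_0$ could never be a complete system. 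Independently I fix a bijection assigning the $\lambda$ flatten elements to the $\lambda$ classes, prescribing $\pi(\mathcal{L})$. I then pick coordinates block by block and, inside a block, one element at a time: a new coordinate $c$ must satisfy $c\in C^\lambda_{\sigma}$ (its prescribed value class) together with $c-c'\in C^\lambda_{\beta}$ for each earlier coordinate $c'$ of the same block. These are cyclotomic conditions on $c$ with centres $0$ and the earlier $c'$; the centres are pairwise distinct, since every $c'$ lies in a class and is therefore nonzero, while two earlier coordinates of a block differ by a prescribed class element and so are distinct. As the number of centres never exceeds the block size, hence $\max(K)$, and $Q(\lambda,n)$ is nondecreasing in $n$, Theorem \ref{BP} provides an admissible $c$ whenever $q>Q(\lambda,\max(K))$. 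Carrying this through all coordinates produces a lift meeting every prescription, so $C^\lambda$ is a perfect companion and $\mathcal{L}$ is perfect.

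I expect the main obstacle to be purely organisational rather than analytic: verifying that the coset prescriptions for all the $\Delta_g$ and for $\pi(\mathcal{L})$ can be chosen \emph{simultaneously consistently}, and that at each step of the element-by-element construction the centres fed into Theorem \ref{BP} are genuinely distinct. The evenness of $\lambda$ together with $-1\in C^\lambda_{\lambda/2}$ is what reconciles the $g=0$ and involutory prescriptions with negation; once this compatibility is in place, the Weil bound does the rest.
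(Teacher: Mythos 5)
Your proposal is correct and takes essentially the same route as the paper's proof: choose $C^\lambda$ as the perfect companion, prescribe cyclotomic classes for the flatten elements and for the within-block differences compatibly with negation (using that $\lambda$ is even and that $q\equiv\lambda+1 \ (\mathrm{mod} \ 2\lambda)$ forces $-1\in C^\lambda_{\lambda/2}$, with the involutory/$g=0$ case handled by a half-prescription shifted by $\lambda/2$), and then build the lift one coordinate at a time via Theorem \ref{BP}, noting the centres are distinct and at most $\max(K)$ in number. The paper merely formalizes your ``prescriptions'' as explicit bijections $\phi$ and $\psi$ with $\psi(h,j,i)=\psi(h,i,j)+\lambda/2$.
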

\begin{proof}
Let $\Sigma=\{B_1,\dots,B_\sigma\}$ be a harmonious $(G,K,\lambda)$-SDF with $|B_h|=k_h$ and 
$B_h=\{b_{h,1},\dots,b_{h,k_h}\}$ for $1\leq h\leq \sigma$.

Let $S$ be the set of all pairs $(h,i)$ with $h\in\{1,\dots,\sigma\}$ and $i\in\{1,\dots,k_h\}$.
This set has size $k_1+k_2+\dots+k_\sigma=\lambda$ since $\Sigma$ is harmonious, hence we may choose a
bijection $\phi: S  \longrightarrow \Z_\lambda$.

Let $G_0=\{g\in G\ | \ 2g=0\}$ be the set of all involutions of $G$ together with $0$.
Obviously, $G\setminus G_0$ can be partitioned into pairs $\{g,-g\}$ of opposite and distinct elements of $G$. 
Thus we can write $G=G_0 \ \cup \ G^+ \ \cup \ G^-$ where $G^+$ is a complete set of representatives for the
above pairs and $G^-=-G^+$.

Let $T$ be the set of all triples $(h,i,j)$ with $h\in\{1,\dots,\sigma\}$ and $i$, $j$
distinct elements of $\{1,\dots,k_h\}$. Then consider the bijection
$$^{^{\overline{ \ \ \ }}}: (h,i,j)\in T \longrightarrow \overline{(h,i,j)} = (h,j,i)\in T$$
which is clearly involutory and without fixed elements.

For every $g\in G$, let $T_g$ be the set of triples $(h,i,j)$ of $T$ such that $b_{h,i}-b_{h,j}=g$.
By definition of a $(G,K,\lambda)$-SDF, $\{T_g \ | \ g\in G\}$ is a partition of $T$ into subsets of size $\lambda$.
We have:
$$(h,i,j)\in T_g \Longleftrightarrow b_{h,i}-b_{h,j}=g \Longleftrightarrow b_{h,j}-b_{h,i}=-g \Longleftrightarrow (h,j,i)\in T_{-g}$$
hence
\begin{equation}\label{conjugate}
\overline{T_g}=T_{-g} \quad \forall g\in G
\end{equation}
For any $g\in G^+$, let us choose an arbitrary bijection $\psi_g: T_g \longrightarrow \Z_\lambda$.
In view of (\ref{conjugate}) and recalling that the index of a SDF is always even, it makes sense to consider the map 
\begin{equation}\label{conjugate2}
\psi_{-g}: (h,i,j)\in T_{-g} \longrightarrow \psi_g(h,j,i)+{\lambda\over2}\in \Z_\lambda
\end{equation}
The fact that $\psi_g$ is a bijection implies that $\psi_{-g}$ is a bijection as well.

Given $g\in G_0$, we have $g=-g$ and hence $\overline{T}_g=T_{g}$ in view of (\ref{conjugate}).
Thus we  can write $T_g=T'_g \ \cup \ \overline{T'}_g$ for a suitable ${\lambda\over2}$-subset
$T'_g$ of $T_g$. Let $\Z'_\lambda$ be a system of representatives for the
cosets of $\{0,{\lambda\over2}\}$ in $\Z_\lambda$ and let us choose any bijection
$\psi'_g: T'_g \longrightarrow \Z'_\lambda$. Then consider the map 
\begin{equation}\label{conjugate3}
\overline{\psi'}_g:  (h,i,j)\in\overline{T'}_g \longrightarrow \psi'_g(h,j,i)+{\lambda\over2}\in\Z_\lambda\setminus\Z'_\lambda
\end{equation}
Also here, the fact that $\psi'_g$ is a bijection assures that $\overline{\psi'}_g$ is a bijection too. 
The map $\psi_g: T_g \longrightarrow \Z_\lambda$ 
defined by 
$$\psi_g(h,i,j)=\begin{cases}\psi'_g(h,i,j) & {\rm if} \ (h,i,j)\in T'_g \medskip\cr \overline{\psi'}_g(h,i,j) & {\rm if} \ (h,i,j)\in \overline{T'}_g\end{cases}$$
is clearly a bijection as well. 

Finally, let $\psi: T \longrightarrow \Z_\lambda$ be the map whose restriction to $T_g$ coincides with $\psi_g$ for every $g\in G$.
In view of (\ref{conjugate2}) and (\ref{conjugate3}) this map has the following property:
\begin{equation}\label{psi}
\psi(h,j,i)=\psi(h,i,j)+{\lambda\over2} \quad \forall (h,j,i)\in T
\end{equation}

Now let $q$ be a prime power as in the statement and let us lift each $B_h$ to a subset 
$\ell(B_h)=\{(b_{h,1},c_{h,1}),\dots,(b_{h,k_h},c_{h,k_h})\}$ of $G\times \F_q$
by taking the first element $c_{h,1}$ arbitrarily in $\F_q^*$ and then by taking the other elements 
$c_{h,2}$, $c_{h,3}$, \dots, $c_{h,k_h}$ iteratively, one by one, according to the rule that
once that $c_{h,i-1}$ has been chosen, we pick $c_{h,i}$ arbitrarily in the set 
$$X_{h,i}=\{x\in C^{\lambda}_{\phi(h,i)} \ : \ x-c_{h,j}\in C^\lambda_{\psi(h,i,j)} \quad {\rm for} \ 1\leq j\leq i-1\}$$
where $\phi$ is the bijection from $S$ to $\Z_\lambda$ chosen at the beginning of this proof.
It is convenient to rewrite $X_{h,i}$ as
$$X_{h,i}=\{x\in \F_q^* \ : x-0\in C^{\lambda}_{\phi(h,i)} \ {\rm and} \ x-c_{h,j}\in C^\lambda_{\psi(h,i,j)} \quad {\rm for} \ 1\leq j\leq i-1\}.$$
The above choice of $c_{h,i}$ can be actually done since $X_{h,i}$ is not empty by Lemma \ref{BP}. 
Indeed, as shown below, $\{0,c_{h,1},...,c_{h,i-1}\}$ is a set and $q>Q(\lambda,i)$:
\begin{itemize}
\item if $\{0,c_{h,1},...,c_{h,i-1}\}$ had repeated elements we would have
either $0=c_{h,j}$ with $1\leq j\leq i-1$ contradicting that
$c_{h,j}$ had been picked in $X_{h,j}\subset C^{\lambda}_{\phi(h,j)}\subset\F_q^*$, or
$0=c_{h,j_1}-c_{h,j_2}$ with $1\leq j_1<j_2\leq i-1$ contradicting that 
$c_{h,j_2}-c_{h,j_1} \in C^\lambda_{\psi(h,j_2,j_1)}\subset\F_q^*$ since $c_{h,j_2}$ had been picked in $X_{h,j_2}$;
\item we have $\max(K) \geq k_h \geq i$ and then $q>Q(\lambda,\max(K))>Q(\lambda,i)$ since 
the function $Q(\lambda,n)$ defined before Theorem \ref{BP} is clearly increasing in the variable $n$.
\end{itemize}

Now we show that the lifting ${\cal L}=\{\ell(B_1),\dots,\ell(B_\sigma)\}$ of $\Sigma$ to $G\times \F_q$ is perfect.
First observe that we have 
\begin{equation}\label{c-c}
c_{h_,i}-c_{h,j}\in C^\lambda_{\psi(h,i,j)}\quad \forall (h,i,j)\in T
\end{equation}
This is clear if $i>j$ considering the rule used for selecting the $c_{h,i}$'s.
If $i<j$, for the same reason, we have $c_{h_,j}-c_{h,i}\in C^\lambda_{\psi(h,j,i)}$ and then, by (\ref{psi}), we can write
\begin{equation}\label{c-c2}
c_{h_,j}-c_{h,i}\in C^\lambda_{\psi(h,i,j)+\lambda/2}
\end{equation}
The hypothesis $q\equiv\lambda+1$ (mod $2\lambda$) implies that 
$-1\in C^\lambda_{\lambda/2}$. Thus, multiplying (\ref{c-c2}) by $-1$ we get (\ref{c-c}) again.

Now we have $$\bigcup_{h=1}^\sigma\Delta\ell(B_h)=\bigcup_{g\in G}\{g\}\times\Delta_g$$
with $\Delta_g=\{c_{h,i}-c_{h,j} \ | \ (h,i,j)\in T_g\}$. Thus, considering (\ref{c-c}) and 
the fact that $\psi$ is bijective on each $T_g$, we see that  $\Delta_g$ is a 
complete system of representatives for the cosets of $C^\lambda$ in $\F_q^*$ 
whichever is $g\in G$. Hence we have $C^\lambda\cdot\Delta_g=\F_q^*$ for every $g\in G$, i.e., 
$C^\lambda$ is a good companion for $\cal L$.
Finally, the projection of the flatten of $\cal L$ on $\F_q$ is
$\pi({\cal L})=\{c_{h,i} \ | \ (h,i)\in S\}$ and then, recalling that $c_{h,i}\in C^{\lambda}_{\phi(h,i)}$
for every $(h,i)\in S$ and that $\phi: S \longrightarrow \Z_\lambda$ is bijective, we can
say that $\pi({\cal L})$ is also a complete system of representatives for the cosets of $C^\lambda$ in $\F_q^*$, i.e., $C^\lambda\cdot\pi({\cal L})=\F_q^*$.
We conclude that $C^\lambda$ is a perfect companion for $\cal L$. 
\end{proof}

Putting together Lemma \ref{weil} and Corollary \ref{cor} we obtain the following.

\begin{thm}\label{HSDF->HS}
If there exists a harmonious $(G,K',\lambda)$-SDF, then there exists a HS$(2,K,gq)$, with $g=|G|$ and $K$ the underlying set
of $K' \ \cup \ \{g\}$, for every prime power $q\equiv\lambda+1$ $($mod $2\lambda)$ greater than $Q(\lambda,\max(K'))$.
\end{thm}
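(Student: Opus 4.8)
The plan is simply to chain together the two preparatory results, Lemma \ref{weil} and Corollary \ref{cor}, which between them already carry all the weight. First I would take the harmonious $(G,K',\lambda)$-SDF whose existence is hypothesized and feed it into Lemma \ref{weil}, with its multiset $K'$ of block sizes playing the role of the $K$ appearing there. For every prime power $q\equiv\lambda+1$ (mod $2\lambda$) with $q>Q(\lambda,\max(K'))$, this yields a perfect lifting $\cal L$ of the SDF to $G\times\F_q$.

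The one small compatibility point to verify is that the two congruence conditions line up. Corollary \ref{cor} is phrased for $q=\lambda n+1$, i.e. $q\equiv1$ (mod $\lambda$), whereas the present statement (inherited from Lemma \ref{weil}) imposes the stronger $q\equiv\lambda+1$ (mod $2\lambda$). Since $\lambda+1\equiv1$ (mod $\lambda$), any $q$ satisfying the hypothesis of the theorem automatically has $q\equiv1$ (mod $\lambda$); writing $n=(q-1)/\lambda$ (an odd integer, since $q-1$ is $\lambda$ times an odd number) then places us squarely within the hypotheses of Corollary \ref{cor}.

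With the perfect lifting in hand, I would invoke Corollary \ref{cor} verbatim: it produces a $(G\times\F_q,\underline{gn}K'\cup\{g\},g)$-PDF that is the base parallel class of an HS$(2,K,gq)$, where $g=|G|$ and $K$ is the underlying set of $K'\cup\{g\}$. This is precisely the asserted conclusion, with nothing left to establish.

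Because all the genuine content already resides in Lemma \ref{weil} (the cyclotomy-plus-Weil argument that manufactures the perfect lifting) and in Corollary \ref{cor} (which itself bundles Theorems \ref{PL->RDF} and \ref{RDF->PDF}), there is no real obstacle at this final stage; the proof is essentially a two-step citation. The only things demanding care are the bookkeeping of the two congruence conditions just mentioned and the confirmation that it is $\max(K')$, not $\max(K)$, that is the correct argument of $Q$, since the adjoined cosets of $H$ (of size $g$) play no role in the cyclotomic construction of the lifting.
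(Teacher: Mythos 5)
Your proof is correct and is exactly the paper's argument: the paper obtains Theorem \ref{HSDF->HS} precisely by chaining Lemma \ref{weil} (to get the perfect lifting) with Corollary \ref{cor} (to get the PDF and the harmonious linear space). Your additional bookkeeping --- that $q\equiv\lambda+1 \pmod{2\lambda}$ implies $q\equiv1\pmod{\lambda}$ so Corollary \ref{cor} applies, and that $\max(K')$ is the correct argument of $Q$ --- is sound and merely makes explicit what the paper leaves implicit.
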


% More generally, in view of Theorem \ref{composition}, we can state the following.
% \begin{thm}
% If there exists a harmonious $(G,K',\lambda)$-SDF, then there exists a HS$(2,K,v)$ for every $v=gq_1\dots q_n$
% with $g=|G|$ and $q_i\equiv\lambda+1$ $($mod $2\lambda)$ a prime power greater than $Q(\lambda,\max(K'))$.
% \end{thm}

\section{Some explicit results}

Unfortunately, the harmonious linear spaces realizable via Theorem \ref{HSDF->HS} are not ``concrete" in the sense that they have a huge number of points.
Indeed the values of $Q(e,n)$ are very large in general. Yet, as shown in this section, it is sometimes possible to get concrete constructions
if the block sizes are small.

\begin{thm}\label{2,2,4bis}
For any prime power $q=8n+1>9$ there exists a $(\Z_2\times\F_q,\{2^{4n+1},4^{2n}\},2)$-PDF which is the starter parallel class of a HS$(2,\{2,4\},2q)$.
\end{thm}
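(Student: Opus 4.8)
The plan is to realize the desired PDF as the base parallel class produced by Corollary \ref{cor}, applied to the harmonious $(\Z_2,\{2,2,4\},8)$-SDF
$$\Sigma=\{\{0,0\}, \ \{0,0\}, \ \{0,0,1,1\}\}$$
of Example \ref{2,2,4}. Indeed, with $G=\Z_2$ (so $g=2$), $\lambda=8$ and $q=8n+1=\lambda n+1$, Corollary \ref{cor} turns a perfect lifting of $\Sigma$ to $\Z_2\times\F_q$ into a $(\Z_2\times\F_q,\underline{2n}\{2,2,4\}\cup\{2\},2)$-PDF, that is a $(\Z_2\times\F_q,\{2^{4n+1},4^{2n}\},2)$-PDF, which is the base parallel class of a HS$(2,\{2,4\},2q)$. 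So the whole statement reduces to exhibiting, for every prime power $q=8n+1>9$, a perfect lifting of $\Sigma$.

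First I would look for a perfect lifting of exactly the shape used for $q=17$ in Example \ref{2,2,4}, namely
$${\cal L}=\bigl\{\{(0,a),(0,-a)\}, \ \{(0,b),(0,c)\}, \ \{(0,-b),(0,d),(1,-c),(1,-d)\}\bigr\}$$
with $a,b,c,d\in\F_q^*$ to be determined. The difference computation of Example \ref{2,2,4} uses only the additive structure of $\Z_2\times\F_q$, so it carries over verbatim: one gets $\Delta{\cal L}=(\{0\}\times\Delta_0)\cup(\{1\}\times\Delta_1)$ with $\Delta_0=\{1,-1\}\cdot\Delta_0'$, $\Delta_1=\{1,-1\}\cdot\Delta_1'$ and projection $\pi({\cal L})=\{1,-1\}\cdot\pi'$, where
$$\Delta_0'=\{2a,b-c,b+d,c-d\},\quad \Delta_1'=\{2d,b-c,b-d,c+d\},\quad \pi'=\{a,b,c,d\}.$$

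The heart of the matter is then to choose $a,b,c,d$ so that $\Delta_0'$, $\Delta_1'$ and $\pi'$ are simultaneously complete systems of representatives for the cosets of the index-$4$ subgroup $C^4$ of $\F_q^*$ -- the analogue of condition (\ref{completesystems}). Granting such a quadruple, the construction closes up exactly as at $q=17$: since $q\equiv1\pmod 8$ we have $-1\in C^4$, so $\{1,-1\}$ is a subgroup of $C^4$ and I may take $S$ to be a transversal of $\{1,-1\}$ in $C^4$, whence $|S|=(q-1)/8=n$ and $S\cdot\{1,-1\}=C^4$. Then $S\cdot\Delta_g=C^4\cdot\Delta_g'=\F_q^*$ for $g=0,1$ and $S\cdot\pi({\cal L})=C^4\cdot\pi'=\F_q^*$, so $S$ is a perfect companion and $\cal L$ is a perfect lifting, as required by Corollary \ref{cor}.

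The main obstacle is precisely the existence of the quadruple $(a,b,c,d)$ for every admissible $q>9$. Since multiplying a complete coset system by any element of $\F_q^*$ again gives a complete coset system, I may normalize $a=1$, leaving three free variables subject to the requirement that the three sets above be coset-complete. I expect to settle this by cyclotomy: the conditions amount to prescribing the order-$4$ cyclotomic classes of a bounded number of linear expressions in $a,b,c,d$, so a character-sum estimate in the spirit of Theorem \ref{BP} (via Weil's bound) should guarantee a solution for all $q$ beyond an explicit threshold, after which the finitely many remaining admissible prime powers can be disposed of by direct search -- the quadruple $(2,3,6,4)$ already handling $q=17$. Pinning down a threshold small enough to be exhaustively checkable, and organizing the coupled conditions (note that $b-c$ is shared by $\Delta_0'$ and $\Delta_1'$, and that the class of $2$ varies with $q$) so that the relevant character sums can be cleanly bounded, is where the real work lies.
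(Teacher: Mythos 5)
Your structural reduction is exactly the paper's: apply Corollary \ref{cor} (i.e., Theorems \ref{PL->RDF} and \ref{RDF->PDF}) to the harmonious $(\Z_2,\{2,2,4\},8)$-SDF of Example \ref{2,2,4}, note that $q\equiv1\pmod 8$ gives $-1\in C^4$ so that a transversal $S$ of $\{1,-1\}$ in $C^4$ becomes a perfect companion as soon as $\Delta'_0$, $\Delta'_1$, $\pi'$ are complete systems of representatives for the cosets of $C^4$, and check the parameter bookkeeping (all of which you do correctly). But the existence of such a ``good'' quadruple $(a,b,c,d)$ for \emph{every} prime power $q=8n+1>9$ is not a loose end -- it is the entire mathematical content of the theorem, and you leave it at the level of ``a character-sum estimate should guarantee a solution beyond a threshold, then search''. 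The paper actually carries this out in three steps: (i) a greedy application of Theorem \ref{BP} in which explicit cyclotomic classes are prescribed and $a$, $b$, $c$, $d$ are chosen one at a time subject to $1$, $2$, $3$ and $4$ simultaneous conditions respectively, which proves existence for all $q>Q(4,4)=263681$; (ii) a computer search exhibiting a good quadruple for each of the $5852$ prime powers $q\equiv1\pmod 8$ in $\,]17,Q(4,4)]$; (iii) the single exceptional case $q=17$, settled by the different lifting of Example \ref{2,2,4}.

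There is also a concrete reason the paper does \emph{not} reuse the lifting shape of Example \ref{2,2,4}, as you propose to do, but switches to
$${\cal L}=\bigl\{\{(0,a),(0,b)\},\ \{(0,c),(0,d)\},\ \{(0,-a),(0,-c),(1,-b),(1,-d)\}\bigr\},$$
for which $\Delta'_0=\{a-b,c-d,a-c,b-d\}$ and $\Delta'_1=\{a-b,a-d,b-c,c-d\}$ consist of plain differences of the four variables: with that shape the greedy class-prescription above is immediate. Your shape carries the terms $2a$ and $2d$, whose class is tied to the $q$-dependent class of $2$, as well as the sums $b\pm d$, $c\pm d$; a greedy argument can still be made to close at threshold $Q(4,4)$, but only if the variables are chosen in the order $a,b,d,c$ -- the natural order $a,b,c,d$ puts $5$ conditions on the last variable, raising the threshold to $Q(4,5)\approx 7.9\cdot 10^6$ and inflating the finite search roughly thirtyfold. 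Finally, your confidence that the residual search will succeed is not justified a priori: feasibility at small $q$ genuinely depends on the chosen shape (the paper's shape admits \emph{no} good quadruple at $q=17$, which is precisely why it falls back on Example \ref{2,2,4} there), so with your shape some small admissible $q$ could likewise fail and require an ad hoc alternative lifting. In short: right reduction, but the existence proof and the verification below the threshold -- the real theorem -- are missing.
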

\begin{proof}
Let $\Sigma=\{\{0,0\}, \ \{0,0\}, \ \{0,0,1,1\}\}$ be the $(\Z_2,\{2,2,4\},8)$-SDF
considered in Example \ref{2,2,4} and consider a lifting of $\Sigma$ to $\Z_2\times\F_q$ of the following form
$${\cal L}=\bigl{\{}\{(0,a),(0,b)\}, \ \{(0,c),(0,d)\}, \ \{(0,-a),(0,-c),(1,-b),(1,-d)\}\bigl{\}}.$$ 
The list of differences of $\cal L$ is 
$$\Delta{\cal L}=(\{0\}\times\Delta_0) \ \cup \ (\{1\}\times\Delta_1)$$
with $\Delta_0=\{1,-1\}\cdot\Delta'_0$ and $\Delta_1=\{1,-1\}\cdot \Delta'_1$
where $\Delta'_0=\{a-b,c-d,a-c,b-d\}$ and $\Delta'_1=\{a-b,a-d,b-c,c-d\}$.
The projection of the flatten of $\cal L$ on $\F_q$ is $\pi({\cal L})=\{1,-1\}\cdot\pi'$ with $\pi'=\{a,b,c,d\}$.

Reasoning exactly as in Example \ref{2,2,4} we can see that  if 
$(a,b,c,d)$ is a quadruple of elements of $\F_q^*$ such that  
$\Delta'_0$, $\Delta'_1$, $\pi'$ are complete systems of representatives for the cosets of $C^4$ in $\F_q^*$,
then $\cal L$ is a perfect lifting with perfect companion a complete system of representatives for the cosets of 
$\{1,-1\}$ in $C^4$. A quadruple $(a,b,c,d)$ as above will be called ``good".
If $q=17$, it is possible to see that no good quadruple exists. In this case, however, the assertion is true 
because of Example \ref{2,2,4} where $\Sigma$ was lifted in a different way.
For $q>Q(4,4)$ the existence of a good quadruple can be proved as follows.

Take $a$ in $C^4_0$. Then take $b$ in the set $\{x\in \F_q^* \ : \ x\in C^4_1, x-a\in C^4_0\}$ which is not empty
by Theorem \ref{BP} since we have $q>Q(4,4)>Q(4,2)$. Now take $c$ in the set $\{x\in \F_q^* \ : \ x\in C^4_2, x-a\in C^4_1, x-b\in C^4_1\}$
which is not empty by Lemma \ref{BP} since we have $q>Q(4,4)>Q(4,3)$. Finally, take $d$ in the set 
$\{x\in \F_q^* \ : \ x\in C^4_3, x-a\in C^4_3, x-b\in C^4_3, x-c\in C^4_2\}$ which, again, is not empty by Lemma \ref{BP} 
since we have $q>Q(4,4)$. It is quite clear that the quadruple $(a,b,c,d)$ meets the desired requirements.

One can check that $Q(4,4)=263681$ which, fortunately, is not a huge number;
there are ``only" 5852 prime powers $q\equiv1$ (mod 8) in the range $]17,26381]$.
We have found by computer a good quadruple $(a,b,c,d)$ of $\F_q$ for each of these prime powers $q$.
Here is, for instance, a good quadruple of $\F_q$ for $q<100$.
\begin{center}
\begin{tabular}{|l|c|r|c|c|}
\hline {$q$} & $(a,b,c,d)$   \\
\hline 
\hline $25$ & $(1,g,g+1,4g+1)$ \ \mbox{$g$ root of the primitive polynomial $x^2+x+2$} \\
\hline $41$ & $(1,12,17,32)$ \\
\hline $49$ & $(1,g,3g+3,5g+4)$ \ \mbox{$g$ root of the primitive polynomial $x^2+x+3$} \\
\hline $73$ & $(1,3,20,62)$ \\
\hline $81$ & $(g^3+1,g^3+g+1,2g^2+2,g^3+g+2)$ \\
& \mbox{$g$ root of the primitive polynomial $2x^4+x^3+1$} \\
\hline $89$ & $(1,3,15,55)$ \\
\hline $97$ & $(1,2,20,46)$ \\
\hline
\end{tabular} 
\end{center}

\end{proof}

\begin{thm}\label{3,18bis}
If $q=18n+1$ is a prime power with $n$ odd, then there exists a $(\Z_3\times\F_q,\{3^{12n+1},6^{3n}\},3)$-PDF which is the starter parallel class of a HS$(2,\{3,6\},3q)$.
\end{thm}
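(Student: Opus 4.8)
The plan is to obtain the desired PDF through Corollary~\ref{cor}, applied to the harmonious $(\Z_3,\{3^4,6\},18)$-SDF of Example~\ref{3,18} (the case $k=3$), i.e.\ to
$$\Sigma=\{B_1,\dots,B_5\}=\{O_3,\ O_3,\ \{0,1,2\},\ \{0,1,2\},\ \{0,0,1,1,2,2\}\}.$$
With $g=3$, $\lambda=18$ and $q=18n+1$, Corollary~\ref{cor} converts any perfect lifting of $\Sigma$ to $\Z_3\times\F_q$ into a $(\Z_3\times\F_q,\underline{3n}\{3^4,6\}\cup\{3\},3)$-PDF, which is exactly a $(\Z_3\times\F_q,\{3^{12n+1},6^{3n}\},3)$-PDF and the starter class of a HS$(2,\{3,6\},3q)$. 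Hence it suffices to exhibit a perfect lifting of $\Sigma$ for every prime power $q=18n+1$ with $n$ odd.

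The idea is to lift $\Sigma$ by a cube-root twist tied to the $\Z_3$-shift. Fix a primitive cube root of unity $\omega\in\F_q$ (available since $q\equiv1\pmod 3$) and nonzero parameters $w_1,w_2,y_1,y_2,u,v$ with $u\ne v$; set
$$\ell(B_1)=\{(0,w_1),(0,\omega w_1),(0,\omega^2 w_1)\},\qquad \ell(B_2)=\{(0,w_2),(0,\omega w_2),(0,\omega^2 w_2)\},$$
$$\ell(B_3)=\{(0,y_1),(1,\omega y_1),(2,\omega^2 y_1)\},\qquad \ell(B_4)=\{(0,y_2),(1,\omega y_2),(2,\omega^2 y_2)\},$$
$$\ell(B_5)=\{(0,u),(0,v),(1,\omega u),(1,\omega v),(2,\omega^2 u),(2,\omega^2 v)\}.$$
Each $\ell(B_i)$ is fixed by the automorphism $(x,z)\mapsto(x+1,\omega z)$ of $\Z_3\times\F_q$, and this map sends a difference $(g,d)$ to $(g,\omega d)$; hence, writing $\Delta{\cal L}=\bigcup_{g\in\Z_3}\{g\}\times\Delta_g$, every $\Delta_g$ is invariant under multiplication by $\omega$.

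A short computation then yields
$$\Delta_1=\langle\omega\rangle\cdot\{(\omega-1)y_1,\,(\omega-1)y_2,\,(\omega-1)u,\,(\omega-1)v,\,\omega u-v,\,\omega v-u\},\qquad \Delta_2=-\Delta_1,$$
$$\Delta_0=\{\pm1,\pm\omega,\pm\omega^2\}\cdot\{(1-\omega)w_1,\,(1-\omega)w_2,\,u-v\},\qquad \pi({\cal L})=\langle\omega\rangle\cdot\{w_1,w_2,y_1,y_2,u,v\};$$
thus $\Delta_1$ and $\pi({\cal L})$ are unions of cosets of $\langle\omega\rangle$, while $\Delta_0$ is a union of cosets of the group $\{\pm1,\pm\omega,\pm\omega^2\}$ of sixth roots of unity. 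Now take the companion $S$ to be a transversal for the cosets of $\langle\omega\rangle$ in $C^6$, so that $|S|=\tfrac13|C^6|=\tfrac{q-1}{18}$, the mandatory size of a perfect companion. Since $n$ is odd we have $-1\notin C^6$ (equivalently $q\equiv19\pmod{36}$), whence $C^6\cup(-1)C^6=C^3$; consequently $S\cdot\langle\omega\rangle=C^6$ and $S\cdot\{\pm1,\pm\omega,\pm\omega^2\}=C^3$. Therefore the good-companion and perfect-companion identities $S\cdot\Delta_g=\F_q^*$ $(g\in\Z_3)$ and $S\cdot\pi({\cal L})=\F_q^*$ reduce to three cyclotomic requirements on the parameters: that $\{(1-\omega)w_1,(1-\omega)w_2,u-v\}$ be a transversal for the cosets of $C^3$, and that each of $\{(\omega-1)y_1,(\omega-1)y_2,(\omega-1)u,(\omega-1)v,\omega u-v,\omega v-u\}$ and $\{w_1,w_2,y_1,y_2,u,v\}$ be a transversal for the cosets of $C^6$.

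It then remains to produce parameters meeting these coset conditions. I would fix $w_1,w_2,y_1,y_2$ in prescribed pairwise-distinct cyclotomic classes, then $u$, and finally $v$; only the last step is constraining. Refining the single order-$3$ requirement to an order-$6$ one, placing $v$ amounts to finding $x\in\F_q$ with $x-c_j\in C^6_{\alpha_j}$ for the four pairwise distinct constants $c_j\in\{0,u,\omega u,\omega^2 u\}$, which exists once $q>Q(6,4)$ by Theorem~\ref{BP}; as $Q(6,4)$ is only of order $10^7$, the finitely many prime powers $q\equiv19\pmod{36}$ below it (plus any sporadic exception treated by an ad hoc lifting, as $q=17$ was in Theorem~\ref{2,2,4bis}) can be cleared by computer. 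Corollary~\ref{cor} then yields the asserted PDF and HS. I expect the main obstacle to be the reduction of the previous paragraph: checking that the twist makes $\Delta_0,\Delta_1,\Delta_2,\pi({\cal L})$ simultaneously unions of $\langle\omega\rangle$-cosets (with $\Delta_0$ a union of cosets of the sixth roots of unity) and, crucially, that one companion $S\subseteq C^6$ serves them all at once—this is exactly where the hypothesis $n$ odd, i.e.\ $-1\notin C^6$ and hence $C^6\cup(-1)C^6=C^3$, is indispensable.
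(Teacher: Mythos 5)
Your proposal is correct and follows essentially the same route as the paper's proof: the same harmonious $(\Z_3,\{3^4,6\},18)$-SDF, the same $\omega$-twisted lifting (the paper merely normalizes $u=1$ and writes $\varepsilon$ for $\omega$), the same perfect companion (a transversal of $\langle\omega\rangle$ in $C^6$, exploiting $-1\in C^6_3$ when $n$ is odd), and the same existence argument for the parameters via a four-point cyclotomic condition with threshold $Q(6,4)$ plus computer search below it. One cosmetic slip: $\ell(B_1)$ and $\ell(B_2)$ are \emph{not} fixed by $(x,z)\mapsto(x+1,\omega z)$ (their images are the translates $\ell(B_i)+(1,0)$), but since translates have the same difference list, your displayed formulas for $\Delta_0,\Delta_1,\Delta_2$ and $\pi({\cal L})$ are nevertheless correct and nothing downstream is affected.
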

\begin{proof}
Let $\Sigma=\{B_1,\dots,B_5\}$ be the harmonious $(\Z_3,\{3^4,6\},18)$-SDF of Example \ref{3,18}, let
$q=18n+1$ be a prime power with $n$ odd, and let $\varepsilon$ be a primitive cube root of unity in $\F_q$.
Consider a lifting ${\cal L}=\{L_1,\dots,L_5\}$ of $\Sigma$ to $\Z_3\times\F_q$ of the following form:
$$L_1=\{(0,a),(0,a\varepsilon),(0,a\varepsilon^2)\},\quad\quad L_2=\{(0,b),(0,b\varepsilon),(0,b\varepsilon^2)\},$$
$$L_3=\{(0,c),(1,c\varepsilon),(2,c\varepsilon^2)\},\quad\quad L_4=\{(0,d),(1,d\varepsilon),(2,d\varepsilon^2)\},$$
$$L_5=\{(0,1),(0,e),(1,\varepsilon),(1,e\varepsilon),(2,\varepsilon^2),(2,e\varepsilon^2)\}.$$
The list of differences of $\cal L$ is $$\Delta{\cal L}=\bigcup_{i=0}^2\{i\}\times \Delta_i$$ 
with $\Delta_i=\langle\varepsilon\rangle\cdot \Delta'_i$ for $i=0,1,2$ where
$$\Delta'_0=\{\pm a(\varepsilon-1),\pm b(\varepsilon-1),\pm(e-1)\};$$ 
$$\Delta'_1=-\Delta'_2=\{c(\varepsilon-1),d(\varepsilon-1),
\varepsilon-1,e(\varepsilon-1),\varepsilon-e,e\varepsilon-1\}.$$
Also, it is readily seen that the projection of the flatten of $\cal F$ on $\F_q$ is
$$\pi({\cal L})=\langle\varepsilon\rangle\cdot \pi' \quad{\rm with} \quad \pi'=\{1,a,b,c,d,e\}.$$ 

% =(\varepsilon-1)\cdot\{a,b,{e-1\over\varepsilon-1}\}
% =(\varepsilon-1)\cdot\{1,c,d,e,{\varepsilon-e\over\varepsilon-1},{e\varepsilon-1\over\varepsilon-1}\}

Assume that each of the three sixtuples $\Delta'_0$, $\Delta'_1$ and $\pi'$ is a complete system of representatives for the 
cosets of $C^6$ in $\F_q^*$. In this case we have 
\begin{equation}\label{C^6Delta}
C^6\cdot\Delta'_i=\F_q^*\quad{\rm for} \ i=0,1,2\quad{\rm and}\quad C^6\cdot \pi'=\F_q^*
\end{equation}
Let $S$ be a complete system of representatives for the cosets of $\langle\varepsilon\rangle$
in $C^6$, so that we have 
\begin{equation}\label{S<epsilon>}
S\cdot\langle\varepsilon\rangle=C^6
\end{equation} 
Using (\ref{C^6Delta}) and (\ref{S<epsilon>}) we get:
$$S\cdot\Delta_i=S\cdot\langle\varepsilon\rangle\cdot\Delta'_i=C^6\cdot\Delta'_i=\F_q^*$$
$${\rm and}$$
$$S\cdot\pi({\cal L})=S\cdot\langle\varepsilon\rangle\cdot \pi'=C^6\cdot \pi'=\F_q^*.$$
This means that $S$ is a perfect companion for $\cal L$, i.e., $\cal L$ is a perfect lifting of $\Sigma$.
Hence, by Theorem \ref{PL->RDF} there exists a resolvable $(\Z_3\times\F_q,\Z_3\times\{0\},\{3^{4n},6^{n}\},1)$-DF and 
then, by Theorem \ref{RDF->PDF}, there exists a
$(\Z_3\times\F_q,\{3^{12n+1},6^{3n}\},3)$-PDF which is the base parallel class of a HS$(2,\{3,6\},3q)$.

Thus, for proving the assertion it is enough to show that 
there is at least one ``good" quintuple $(a,b,c,d,e)$ satisfying conditions (\ref{C^6Delta}).
We found it explicitly for $q<Q(6,4)=9152353$ by computer search.
For $q>Q(6,4)$ it is enough to reason as follows.
Take $(a,b,c,d)$ arbitrarily in the cartesian product $C^6_1\times C^6_2\times C^6_3\times C^6_4$.
Then, if $C^6_i$ is the cyclotomic class of order 6 containing $\varepsilon-1$,
take $e$ in the set $$\{x\in \F_q^* \ : \ x\in C^6_{5}, x-1\in C^6_i, x-\varepsilon\in C^6_{i+4}, x-\varepsilon^2\in C^6_{i+2}\}$$
which, by Lemma \ref{BP}, is not empty since $q>Q(6,4)$. Taking into account that $-1\in C^6_3$ since $n$ is odd and that
$\varepsilon$ is obviously in $C^6_0$, it is easy to see that $\Delta'_0$, $\Delta'_1$ and $\pi'$ are evenly distributed over the cosets $C^6_0$, \dots, $C^6_5$,
i.e., $(a,b,c,d,e)$ is good.
\end{proof}

\begin{ex}
Let us see how to apply the above theorem in the smallest case $q=19$.
Taking $(a,b,c,d,e)=(2,4,5,8,10)$ as good quintuple satisfying (\ref{C^6Delta}),
and taking $\varepsilon=7$ as primitive cube root of unity of $\F_{19}$, the blocks of 
the perfect lifting $\cal L$ are the following: 
$$L_1=\{(0,2),(0,14),(0,3)\},\quad\quad L_2=\{(0,4),(0,9),(0,6)\},$$
$$L_3=\{(0,5),(1,16),(2,17)\},\quad\quad L_4=\{(0,8),(1,18),(2,12)\},$$
$$L_5=\{(0,1),(0,10),(1,7),(1,13),(2,11),(2,15)\}.$$

In this case $\langle\varepsilon\rangle$ coincides with $C^6$, hence we can take $S=\{1\}$. 
Thus, by Theorem \ref{PL->RDF}, $\cal L$ itself is a resolvable $(\Z_3\times\F_{19},\Z_3\times\{0\},\{3^4,6\},1)$-DF. 
Now, let us identify $\Z_3\times\F_{19}$ with $\Z_{57}$ via the isomorphism (given by the Chinese remainder 
theorem) mapping every $(x,y)$ of $\Z_3\times\F_{19}$ to the element $19x-18y$ of $\Z_{57}$.
In this way $\cal L$ can be viewed as a resolvable $(\Z_{57},19\Z_{57},\{3^{4},6\},1)$-DF  whose blocks are
$$\{21, 33, 3\}, \{42, 9, 6\}, \{24, 16, 17\}, \{27, 37, 50\}, \{39, 48, 7, 13, 11, 53\}.$$
At this point, applying Theorem \ref{RDF->PDF}, we get a $(\Z_{57},\{3^{13},6^3\},3)$-PDF 
whose blocks are $\{0,19,38\}$ and the following

$$\{21, 33, 3\}, \{42, 9, 6\}, \{24, 16, 17\}, \{27, 37, 50\}, \{39, 48, 7, 13, 11, 53\},$$
$$\{40, 52, 22\}, \{4, 28, 25\}, \{43, 35, 36\}, \{46, 56, 12\}, \{1, 10, 26, 32, 30, 15\},$$
$$\{2, 14, 41\}, \{23, 47, 44\}, \{5, 54, 55\}, \{8, 18, 31\}, \{20, 29, 45, 51, 49, 34\}.$$

The above PDF is the starter parallel class of a HS$(2,\{3,6\},57)$ with 247 blocks of size 3 
and 57 blocks of size 6.
\end{ex}

Finally, we give a strong indication about the existence of a HS$(2,\{3,4,5,6\},56n+4)$ with $14n+1$ blocks of size 4 and
$56n^2+4n$ blocks of size $k$ for each $k\in\{3,5,6\}$ whenever $q=14n+1$ is a 
prime power greater than 43. Let $\Sigma$ be the harmonious $(\Z_2^2,\{3,5,6\},14)$-SDF of Example \ref{4,14}.
Hence, rewriting the elementary abelian group in the natural way using elements $(0,0)$, $(0,1)$, $(1,0)$ and $(1,1)$,
the blocks of $\Sigma$ are: 

$\{(0,0),(0,0),(0,0)\}$;

 $\{(0,0),(0,0),(0,1),(1,0),(1,1)\}$;
 
 $\{(0,1),(0,1),(1,0),(1,0),(1,1),(1,1)\}\}.$

Consider a lifting $\cal L$ of $\Sigma$ to $\Z_2^2\times \F_q$ of the following form:

$\{(0,0,a), (0,0,b), (0,0,c)\}$

$\{(0,0,d), (0,0,e), (0,1,g), (1,0,-g), (1,1,f)\}$

$\{(0,1,-a), (0,1,-d), (1,0,-b), (1,0,-e), (1,1,-c), (1,1,-f)\}$

It is straightforward to check that $$\Delta{\cal L}=\bigcup_{g\in\Z_2^2}\{g\}\times (\{1,-1\}\cdot\Delta'_g)\quad{\rm and}\quad \pi({\cal L})=\{1,-1\}\cdot\pi'({\cal L})$$
where
$$\Delta'_{(0,0)}=\{a-b,a-c,b-c,d-e,a-d,b-e,c-f\};$$
$$\Delta'_{(0,1)}=\{d-g,e-g,f+g,b-c,b-f,c-e,e-f\};$$
$$\Delta'_{(1,0)}=\{d+g,e+g,f-g,a-c,a-f,c-d,d-f\};$$
$$\Delta'_{(1,1)}=\{d-f,e-f,2g,a-b,a-e,b-d,d-e\};$$
$$\pi'({\cal L})=(a,b,c,d,e,f,g).$$
Reasoning as in Theorem \ref{2,2,4bis} and Theorem \ref{3,18bis}, the desired HS is obtainable provided that
the field elements $a,b,c,d,e,f,g$ are taken in such a way that each of the above 7-tuples is evenly distributed 
over the cosets of $C^7$. Still reasoning as in the above theorems, the existence of a ``good" $(a,b,c,d,e,f,g)$ 
is guaranteed for $q>Q(7,6)$. This number, however, is so large (close to $10^{13}$) that a computer 
search for all prime powers $q\equiv1$ (mod 14) smaller than $Q(7,6)$ is probably too hard (and maybe useless).
On the other hand, a computer search done for the small values of $q$ until $10^4$ 
provides strong evidence for conjecturing that a good $(a,b,c,d,e,f,g)$ always exists provided that $q\geq71$. 
Indeed, while we could not find any good 7-tuple in the smallest cases $q=29$ and $q=43$, the number of good 
7-tuples rapidly increases for $71\leq q\leq 9941$.
In the following table we report an example of a good $(a,b,c,d,e,f,g)$ for $q<600$.
\begin{center}
\begin{tabular}{|l|c|r|c|c|}
\hline {$q$} & $(a,b,c,d,e,f,g)$   \\
\hline 
\hline $71$ & $(1,15,44,50,63,53,9)$ \\
\hline $113$ & $(1,2,90,8,12,104,63)$ \\
\hline $127$ & $(1,8,54,112,67,5,48)$ \\
\hline $197$ & $(1,2,124,24,32,179,35)$ \\
\hline $211$ & $(1,2,13,95,120,53,126)$\\
\hline $239$ & $(1,2,70,155,185,235,129)$ \\
\hline $281$ & $(1,2,12,206,265,247,55)$ \\
\hline
\end{tabular} 
\quad\quad\quad
\begin{tabular}{|l|c|r|c|c|}
\hline {$q$} & $(a,b,c,d,e,f,g)$   \\
\hline 
\hline $337$ & $(1,2,10,255,34,113,228)$ \\
\hline $379$ & $(1,2,16,279,154,146,112)$ \\
\hline $421$ & $(1,2,7,41,55,57,105)$ \\
\hline $449$ & $(1,2,4,258,270,149,413)$ \\
\hline $463$ & $(1,2,18,405,263,307,175)$\\
\hline $491$ & $(1,2,4,118,439,407,454)$ \\
\hline  $547$ & $(1,2,4,274,101,247,498)$\\
\hline
\end{tabular} 
\end{center}

Thus we can state the following.
\begin{thm}
If $q=14n+1$ is a prime power with $71\leq q\leq 9941$ or $q>Q(7,6)$, then there exists a $(\Z_2\times\Z_2\times\F_q,\{4^1,3^{4n},5^{4n},6^{4n}\},4)$-PDF 
which is the starter parallel class of a HS$(2,\{3,4,5,6\},3q)$.
\end{thm}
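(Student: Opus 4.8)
The plan is to follow the pattern of Theorems \ref{2,2,4bis} and \ref{3,18bis}: turn the requirement that $\cal L$ be a perfect lifting of $\Sigma$ into a system of cyclotomic conditions on the seven field elements $a,b,c,d,e,f,g$, and then solve that system by the Weil bound of Theorem \ref{BP} for large $q$ and by computer for small $q$. First I would record the hypothesis $q=14n+1$, i.e. $q\equiv1\pmod{14}$, which has two consequences: the index $\lambda=3+5+6=14$ of the harmonious SDF $\Sigma$ of Example \ref{4,14} is admissible, and $-1\in C^7$ (since $(q-1)/2$ is a multiple of $7$). Because $\Delta_g=\{1,-1\}\cdot\Delta'_g$ and $\pi({\cal L})=\{1,-1\}\cdot\pi'({\cal L})$, taking a transversal $S$ of $\{1,-1\}$ in $C^7$ gives $S\cdot\Delta_g=C^7\cdot\Delta'_g$ and $S\cdot\pi({\cal L})=C^7\cdot\pi'({\cal L})$; hence, as soon as each of the five $7$-tuples $\Delta'_{(0,0)},\Delta'_{(0,1)},\Delta'_{(1,0)},\Delta'_{(1,1)},\pi'({\cal L})$ is a complete system of representatives for the cosets of $C^7$ in $\F_q^*$, the set $S$ is a perfect companion and $\cal L$ is a perfect lifting of $\Sigma$. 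At that point Corollary \ref{cor} (equivalently Theorem \ref{PL->RDF} followed by Theorem \ref{RDF->PDF}) produces the claimed $(\Z_2\times\Z_2\times\F_q,\{4^1,3^{4n},5^{4n},6^{4n}\},4)$-PDF and hence the desired harmonious linear space, so the whole theorem reduces to exhibiting a ``good'' tuple $(a,b,c,d,e,f,g)$.

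For the range $q>Q(7,6)$ I would build the tuple one coordinate at a time, exactly as in the proofs of Theorems \ref{2,2,4bis} and \ref{3,18bis}. The key preliminary observation is that, working modulo the cosets of $C^7$ and using $-1\in C^7$, each of the five tuples is determined by the cyclotomic indices of the parameters and of their pairwise differences: a difference $a-b$ and its opposite $b-a$ fall in the same coset, and the slot $2g$ of $\Delta'_{(1,1)}$ has index $\mathrm{ind}(g)+\mathrm{ind}(2)$, so it is governed by the coset of $g$ alone. A finite check then fixes a pattern of target indices (depending only on $\mathrm{ind}(2)\bmod 7$) for which all five tuples become complete residue systems. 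To realise this pattern I would choose the parameters in the order $g,d,e,f,a,b,c$, at each step picking the new element in the prescribed cyclotomic class and subject to the prescribed cyclotomic conditions on its differences with the already chosen parameters; by Theorem \ref{BP} each such set is nonempty provided $q$ exceeds the relevant value $Q(7,\cdot)$. Counting conditions shows that the worst step is the last one, $c$, which carries the six conditions coming from $c,\;c-a,\;c-b,\;c-d,\;c-e,\;c-f$ (the six reference points $0,a,b,d,e,f$ being pairwise distinct, as required by Theorem \ref{BP}), while every earlier step carries at most five. Hence $q>Q(7,6)$ suffices.

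For the range $71\le q\le9941$ I would simply tabulate, as the authors do for $q<600$, an explicit good tuple for each prime power $q\equiv1\pmod{14}$ in the interval, found by computer, each entry being verified directly by checking that the five $7$-tuples are complete residue systems for $C^7$. The main obstacle lies in the combinatorial bookkeeping for the large-$q$ case: unlike the clean multiplicative difference lists of Theorem \ref{3,18bis}, here the four lists $\Delta'_g$ consist of genuine differences and share several terms (for instance $a-b$ lies in both $\Delta'_{(0,0)}$ and $\Delta'_{(1,1)}$), so one must produce a single assignment of cosets that simultaneously makes all five overlapping tuples complete residue systems, with the $2g$ term and the dependence on $\mathrm{ind}(2)$ requiring particular care. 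The remaining difficulty is intrinsic rather than technical: the enormous gap $9941<q\le Q(7,6)\approx 10^{13}$ is left uncovered, which is precisely why the statement is phrased for the two separate ranges and the intermediate cases remain only conjectural.
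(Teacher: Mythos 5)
Your proposal is correct and follows essentially the same route as the paper: reduce perfection of the lifting to the five $7$-tuples $\Delta'_{(0,0)},\Delta'_{(0,1)},\Delta'_{(1,0)},\Delta'_{(1,1)},\pi'({\cal L})$ being complete systems of representatives for the cosets of $C^7$ (using $-1\in C^7$, valid since $q\equiv 1 \pmod{14}$), then produce a good tuple $(a,b,c,d,e,f,g)$ by a sequential application of Theorem \ref{BP} whose worst step carries six conditions (whence the bound $Q(7,6)$) for large $q$, and by computer search for $71\le q\le 9941$. If anything, your handling of the shared difference slots and of the term $2g$ is more explicit than the paper's, which at this point only says ``reasoning as in the above theorems.''
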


\section{Main result}

It is quite evident that the union of some harmonious SDFs in the same group $G$ is still 
a harmonious SDF whose index is the sum of their indices. 

\begin{lem}\label{sum}
If there exist harmonious $(G,K_i,\lambda_i)$-SDFs for $1\leq i\leq n$, then their union is
a harmonious $(G,K,\lambda)$-SDF with $K=\bigcup_{i=1}^nK_i$ and $\lambda=\sum_{i=1}^n\lambda_i$.
\end{lem}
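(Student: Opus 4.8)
The plan is to verify directly that the union of the given harmonious SDFs satisfies both defining conditions of a harmonious SDF. Recall that a $(G,K,\lambda)$-SDF is a collection $\cal F$ of multisubsets of $G$ whose list of differences $\Delta{\cal F}$ equals $\lambda$ times all of $G$ (zero included), and it is harmonious precisely when its flatten has size $\lambda$. So there are really just two things to check for the union ${\cal F}=\bigcup_{i=1}^n{\cal F}_i$: that $\Delta{\cal F}$ is $\lambda$ times $G$, and that the flatten has size $\lambda$, where $\lambda=\sum_{i=1}^n\lambda_i$.

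First I would use the fact that the list of differences of a collection is, by definition, the multiset union of the lists of differences of its members: $\Delta{\cal F}=\bigcup_{B\in{\cal F}}\Delta B$. Since the blocks of ${\cal F}$ are exactly the blocks of the ${\cal F}_i$ taken together, this multiset union splits as $\Delta{\cal F}=\bigcup_{i=1}^n\Delta{\cal F}_i$. Each ${\cal F}_i$ is a $(G,K_i,\lambda_i)$-SDF, so $\Delta{\cal F}_i$ equals $\lambda_i$ times $G$; summing these multisets gives $\Delta{\cal F}=\bigl(\sum_{i=1}^n\lambda_i\bigr)$ times $G=\lambda$ times $G$. This shows ${\cal F}$ is a $(G,K,\lambda)$-SDF with $K=\bigcup_{i=1}^nK_i$ and $\lambda=\sum_{i=1}^n\lambda_i$.

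Next I would address harmoniousness. The flatten of ${\cal F}$ is the multiset union of its blocks, which again decomposes as the multiset union of the flattens of the individual ${\cal F}_i$. Taking sizes, the size of the flatten of ${\cal F}$ is the sum of the sizes of the flattens of the ${\cal F}_i$. Since each ${\cal F}_i$ is harmonious, the size of its flatten equals $\lambda_i$, so the total is $\sum_{i=1}^n\lambda_i=\lambda$. Hence the flatten of ${\cal F}$ has size exactly $\lambda$, which is the definition of harmonious.

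This statement is essentially a bookkeeping observation, so I do not anticipate any genuine obstacle; the whole content is that both the difference-list condition and the flatten-size condition are additive under taking unions of collections, and that the two additivities are compatible because the index $\lambda$ and the flatten size are each additive with matching summands $\lambda_i$. The only point requiring a word of care is that all the ${\cal F}_i$ live in the \emph{same} group $G$, which is part of the hypothesis and is what makes the multiset unions meaningful and the resulting $\Delta{\cal F}$ a multiple of this common $G$.
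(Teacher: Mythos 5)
Your proof is correct: the paper itself gives no written proof of this lemma (it is stated as ``quite evident''), and your direct verification---that both the difference list and the flatten are additive under multiset union of families in the same group $G$---is exactly the intended bookkeeping argument. Nothing is missing, and your remark that all families must live in the same $G$ is the only point of care.
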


\begin{lem}\label{fund}
For any group $G$ of order $k$ and any integer $h>k+1$, there exists a harmonious 
\begin{center}$(G,\{k^{h(h-k-1)},h^k\},hk(h-k))$-SDF.\end{center}
% and a harmonious $(G,\{k^{k(k+h)(k+2h)(h-1)},(k+h)^{k+2h}\},hk(k+h)(k+2h))$-SDF.
\end{lem}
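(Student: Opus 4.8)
The plan is to write down the required strong difference family explicitly, as a union of three kinds of blocks, and then to verify its two defining properties (flatness of the difference list and harmoniousness) by a direct count.

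First I would introduce the three building blocks in $G$: the all-zero block $O_k$ of size $k$; the block $G$ consisting of every element of $G$ taken once, also of size $k$; and the block $B = G \cup O_{h-k}$ of size $h$, in which $0$ has multiplicity $h-k+1$ and every non-zero element has multiplicity $1$. The candidate family is
$$\Sigma = \bigl\{\, O_k^{\,h-k},\ \ G^{\,h(h-k-1)-(h-k)},\ \ B^{\,k}\,\bigr\},$$
i.e.\ we take $h-k$ copies of $O_k$, then $h(h-k-1)-(h-k)$ copies of $G$, and finally $k$ copies of $B$. The number of size-$k$ blocks is then $(h-k)+\bigl(h(h-k-1)-(h-k)\bigr)=h(h-k-1)$ and the number of size-$h$ blocks is $k$, so the multiset of block sizes is exactly $\{k^{h(h-k-1)},\,h^k\}$.

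Next I would compute the three contributions to $\Delta\Sigma$ separately. One sees immediately that $\Delta O_k=\{0^{k(k-1)}\}$; that $\Delta G$ covers each non-zero element of $G$ exactly $k$ times and $0$ not at all; and that in $\Delta B$ the element $0$ occurs with multiplicity $(h-k+1)(h-k)$ while each non-zero element occurs with multiplicity $2(h-k+1)+(k-2)=2h-k$. Weighting by the block multiplicities and adding up, the multiplicity of $0$ in $\Delta\Sigma$ is
$$(h-k)\,k(k-1)+k\,(h-k+1)(h-k)=k(h-k)\bigl[(k-1)+(h-k+1)\bigr]=hk(h-k),$$
while the multiplicity of an arbitrary non-zero $d$ is
$$\bigl(h(h-k-1)-(h-k)\bigr)k+k(2h-k)=k\bigl[h(h-k-1)-(h-k)+2h-k\bigr]=hk(h-k).$$
Both equal the claimed index $\lambda=hk(h-k)$, so $\Delta\Sigma=\lambda\,G$ and $\Sigma$ is a $(G,\{k^{h(h-k-1)},h^k\},hk(h-k))$-SDF.

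Finally I would check harmoniousness and pinpoint where the hypothesis is used. The flatten of $\Sigma$ has size $k\cdot h(h-k-1)+h\cdot k=hk(h-k)=\lambda$, which is precisely the condition defining a harmonious SDF. The only place that genuinely needs $h>k+1$ is the demand that the number $h(h-k-1)-(h-k)$ of copies of $G$ be a non-negative integer: this quantity equals $-1$ when $h=k+1$ but equals $k\ge 0$ when $h=k+2$, and it increases with $h$ afterwards, so it is non-negative exactly when $h>k+1$. I expect this bookkeeping --- arranging the three multiplicities to be non-negative integers while keeping the difference list perfectly flat --- to be the only delicate point, the difference counts themselves being entirely routine.
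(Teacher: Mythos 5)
Your proposal is correct and is essentially identical to the paper's own proof: you use the same three blocks $O_k$, $G$, and $O_{h-k}\cup G$ with the same multiplicities $h-k$, $h(h-k-1)-(h-k)=h^2-hk-2h+k$, and $k$, and the same difference-count verification (including the multiplicities $k(k-1)$, $k$, $(h-k+1)(h-k)$ and $2h-k$). Your added remark on where $h>k+1$ is needed is a sound observation that the paper leaves implicit.
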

\begin{proof}
Consider the multisets $X$, $Y$, $Z$ on $G$ defined as follows 
$$X=O_k;\quad Y=G;\quad Z=O_{h-k} \ \cup \ G.$$
Trivially, we have $\Delta X=O_{k(k-1)}$ and $\Delta Y=\underline{k}(G\setminus\{0\})$ since the 
set of all elements of any group $G$ is a difference set in $G$ of index $|G|$. 
Also, rewriting $Z$ as $O_{h-k+1} \ \cup \ (G\setminus\{0\})$ we
see that $$\Delta Z=\Delta O_{h-k+1} \ \cup \ \Delta(G\setminus\{0\}) \ \cup \ [(G\setminus\{0\})-O_{h-k+1}] \ \cup [O_{h-k+1} - (G\setminus\{0\})].$$
We have
$\Delta O_{h-k+1}=O_{(h-k+1)(h-k)}$ and $\Delta(G\setminus\{0\})=\underline{k-2}(G\setminus\{0\})$
since the set of all non-zero elements of any group $G$ is a difference set in $G$ of index $|G|-2$. 
Finally $(G\setminus\{0\})-O_{h-k+1}=O_{h-k+1} - (G\setminus\{0\})=\underline{h-k+1}(G\setminus\{0\})$. Hence we can write
$$\Delta Z=O_{(h-k+1)(h-k)} \ \cup \ \underline{2h-k}(G\setminus\{0\}).$$
Now consider the family $\Sigma=\{X^{h-k}, Y^{h^2-hk-2h+k}, Z^k\}$. The above informations about $\Delta X$,  $\Delta Y$ and  $\Delta Z$
allow us to say that $\Delta \Sigma=O_{\mu_0} \ \cup \ \mu_1(G\setminus\{0\})$ with
$$\mu_0=(h-k)k(k-1)+k(h-k+1)(h-k)=hk(h-k);$$
$$\mu_1=(h^2-hk-2h+k)k+k(2h-k)=hk(h-k).$$
This means that $\Sigma$ is a SDF of index $\lambda=hk(h-k)$. Its blocks have sizes $k$ and $h$. Those of size $k$ are the 
$h-k$ copies of $X$ and the $h^2-hk-2h+k$ copies of $Y$ for a total of $(h-k)+(h^2-hk-2h+k)=h(h-k-1)$ blocks of size $k$; 
those of size $h$ are the $k$ copies of $Z$.
Thus the flatten of $\Sigma$ has size equal to 
$$h(h-k-1)k+hk=hk(h-k)=\lambda.$$
Thus $\Sigma$ is harmonious and the assertion follows.
\end{proof}
We are now ready to proof our main theorem.
\begin{thm}\label{main}
For any finite non-singleton subset $K$ of $\Z^+$, there are infinitely many values of $v$
for which there exists a harmonious $S(2,K,v)$.
\end{thm}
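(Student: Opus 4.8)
The plan is to reduce the statement about harmonious linear spaces to the existence of harmonious strong difference families, and then to exploit the building blocks already developed in the paper. By Theorem \ref{HSDF->HS}, if we can produce a harmonious $(G,K',\lambda)$-SDF whose underlying set of block sizes together with $|G|$ equals $K$, then a HS$(2,K,gq)$ exists for infinitely many prime powers $q$ (namely all $q\equiv\lambda+1\pmod{2\lambda}$ exceeding $Q(\lambda,\max K')$), giving infinitely many admissible values of $v=gq$. So the entire problem collapses to the \emph{algebraic} question: given a finite non-singleton $K\subset\Z^+$, construct a harmonious SDF realizing exactly the block sizes in $K$.

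First I would choose the group $G$ to have order equal to the largest element of $K$. Write $K=\{k_1<k_2<\dots<k_t\}$ and set $k:=k_t$; let $G$ be any group of order $k$ (e.g.\ $\Z_k$). The role of $k$ is natural because in the harmonious-SDF machinery the block size $|G|=g$ is automatically adjoined to $K'$, so we want $g$ to be one of the prescribed sizes, and taking it to be the maximum lets the remaining sizes be supplied by SDF blocks. Now the key observation is that Lemma \ref{fund} provides, for each $h>k+1$, a harmonious SDF in $G$ with block sizes exactly $\{k,h\}$. Thus for each target size $k_i$ with $i<t$ we can invoke Lemma \ref{fund} with $h=k_i$ --- but only when $k_i>k+1$, which fails since $k_i<k$. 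To fix the direction, I would instead let $G$ have order equal to the \emph{smallest} element, or more robustly use a two-layer approach: pick an auxiliary large $h$ and realize each desired size as a block size of a suitable harmonious SDF on a common group, then take the union via Lemma \ref{sum}.

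Concretely, the cleanest route is this. Fix $G$ with $|G|=k_1$ (the smallest size). For each larger target $k_i$ ($2\le i\le t$), apply Lemma \ref{fund} with the group $G$ of order $k_1$ and with $h=k_i$ provided $k_i>k_1+1$; this yields a harmonious $(G,\{k_1^{\,*},k_i^{\,k_1}\},\lambda_i)$-SDF whose two block sizes are $k_1$ and $k_i$. Handling the possible exceptional case $k_i=k_1+1$ requires a small separate gadget (for instance one of the explicit harmonious SDFs in Examples \ref{classic}--\ref{3,18}, or a direct ad hoc construction), so that every pair $\{k_1,k_i\}$ is attainable. Taking the union of all these SDFs over $i=2,\dots,t$, Lemma \ref{sum} produces a single harmonious $(G,K',\lambda)$-SDF whose set of block sizes is precisely $\{k_1,k_2,\dots,k_t\}=K$, with $\lambda=\sum_i\lambda_i$. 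Feeding this into Theorem \ref{HSDF->HS} then yields HS$(2,K,k_1 q)$ for infinitely many prime powers $q$, completing the proof.

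\emph{The main obstacle} will be the boundary condition $h>k+1$ in Lemma \ref{fund}, which forbids realizing a block size exactly one larger than $|G|$. Making the union come out with block-size set \emph{exactly} $K$ (mandatoriness) requires that every prescribed size genuinely appear and no extraneous size sneaks in; since Lemma \ref{fund} always reintroduces the size $k_1=|G|$, the smallest size is automatically present, but each $k_i>k_1+1$ is supplied cleanly, and the lone gap $k_i=k_1+1$ must be covered by a supplementary harmonious SDF with block sizes $\{k_1,k_1+1\}$. Verifying that such a gadget exists in every group order --- or choosing $|G|$ to avoid the gap altogether --- is the one place where genuine care, rather than routine bookkeeping, is needed.
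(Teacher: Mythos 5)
Your proposal is correct and follows essentially the same route as the paper: take $G$ of order $\min(K)$, build a harmonious SDF for each larger block size via Lemma \ref{fund}, merge them with Lemma \ref{sum}, and feed the union into Theorem \ref{HSDF->HS} (with Dirichlet supplying infinitely many admissible prime powers $q$). The one point you flag as needing care --- the gadget for a block size equal to $\min(K)+1$ --- is settled exactly as you suspect by Example \ref{classic}, which gives a harmonious $(G,k,k^2+k)$-SDF for \emph{any} group $G$ of order $k-1$; this is precisely the substitution the paper makes in its second case.
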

\begin{proof}
Write $K=\{k_0,k_1,\dots,k_t\}$ with $k_0<k_1<\dots<k_t$, and let $G$ be any group of order $k_0$. 
Distinguish two cases according to whether $k_1>k_0+1$
or $k_1=k_0+1$.

1st case: $k_1>k_0+1$.

For $i=1,...,t$, using Lemma \ref{fund} we can construct a harmonious $(G,\{k_0^{\alpha_i},k_i^{\beta_i}\},\lambda_i)$-SDF,
say $\Sigma_i$, for suitable positive integers $\alpha_i$, $\beta_i$, $\lambda_i$.
By Lemma \ref{sum}, the union $\Sigma$ of the $\Sigma_i$s is a harmonious
$(G,K',\lambda)$-SDF with $K'=\{k_0^\alpha,k_1^{\beta_1},...,k_t^{\beta_t}\}$ where $\alpha=\sum_{i=1}^t\alpha_i$
and $\lambda=\sum_{i=1}^t\lambda_i$. Thus, by Theorem \ref{HSDF->HS}, there exists a HS$(2,K,v)$ with
$v=k_0q$ and $q$ any prime power congruent to $\lambda+1$ (mod $2\lambda$) and greater than $Q(\lambda,k_t)$. The assertion then follows considering that there are infinitely 
many primes $q\equiv\lambda+1$ (mod $2\lambda$) by Dirichlet's theorem on arithmetic progressions.

2nd case: $k_1=k_0+1$

It is enough to proceed as in the 1st case by replacing $\Sigma_1$ with a harmonious $(G,k_1,\lambda_1)$-SDF
(which exists in view of Example \ref{classic}).
\end{proof}

\section{Conclusion}
Here, as in \cite{BBGRT}, the maximal prime power factors of an integer $v$ will be called {\it components} of $v$, and
$\F_v$ will denote the ring which is the direct product of all the fields  whose orders are the components of $v$. 
Note that every harmonious linear space constructed in the above sections 
comes from a resolvable $(G\times\F_q,G\times\{0\},K,1)$-DF for suitable $G$, $q$ and $K$.

Thus, considering that the multiplication table of $\F_q$ deprived of the zero-row is a homogeneous $(\F_q,q-1,1)$-DM 
for any prime power $q$, the iterated use of Remark \ref{rem} allows us to get the following results.

\begin{thm}
There exists a S$(2,\{2,4\},2v)$ which is harmonious under $\Z_2\times\F_{v}$ provided that $q>9$ 
and $q\equiv1$ $($mod $8)$ for every component $q$ of $v$.
\end{thm}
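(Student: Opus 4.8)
The plan is to build the claimed harmonious $S(2,\{2,4\},2v)$ by an inductive composition over the components of $v$, using Theorem~\ref{2,2,4bis} as the base case and Remark~\ref{rem} (equivalently Theorem~\ref{composition}) as the inductive step. Write $v=q_1q_2\cdots q_m$ where the $q_j$ are the components of $v$; by hypothesis each $q_j$ is a prime power with $q_j\equiv1$ (mod $8$) and $q_j>9$. I would first apply Theorem~\ref{2,2,4bis} to $q_1$ to obtain a resolvable $(\Z_2\times\F_{q_1},\Z_2\times\{0\},\{2^?,4^?\},1)$-DF, which is exactly the kind of resolvable relative difference family that feeds the composition machinery. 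The underlying block-size set here is $K=\{2,4\}$ and the subgroup is $G\times\{0\}$ with $G=\Z_2$.

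First I would set up the induction carefully. Suppose after incorporating $q_1,\dots,q_{j}$ I have a resolvable $(\Z_2\times\F_{q_1}\times\cdots\times\F_{q_j},\,\Z_2\times\{0\},\,K_j,1)$-DF whose underlying block-size set is $\{2,4\}$. To bring in the next component $q_{j+1}$, I apply Theorem~\ref{composition} with $G=\Z_2$, $X=\F_{q_1}\times\cdots\times\F_{q_j}$ and $Y=\F_{q_{j+1}}$: I take the existing resolvable DF as $\mathcal F_X$, a fresh resolvable $(\Z_2\times\F_{q_{j+1}},\Z_2\times\{0\},\{2^?,4^?\},1)$-DF (again from Theorem~\ref{2,2,4bis}) as $\mathcal F_Y$, and I need a homogeneous $(\F_{q_{j+1}},\max(K_j),1)$-difference matrix. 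Since $\max(K_j)=4$, I must supply a homogeneous $(\F_{q_{j+1}},4,1)$-DM. This exists because, as noted in the Conclusion, the multiplication table of $\F_{q_{j+1}}$ with the zero row deleted is a homogeneous $(\F_{q_{j+1}},q_{j+1}-1,1)$-DM, and $q_{j+1}-1\ge 16>4$, so I may simply restrict to any four of its rows. Theorem~\ref{composition} then yields a resolvable $(\Z_2\times X\times Y,\,\Z_2\times\{0\},\,K_{j+1},1)$-DF with $K_{j+1}=\underline{|Y|}K_j\cup K_2$, whose underlying block-size set remains $\{2,4\}$.

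After $m$ steps I obtain a resolvable $(\Z_2\times\F_{q_1}\times\cdots\times\F_{q_m},\,\Z_2\times\{0\},\,K_m,1)$-DF, i.e.\ a resolvable DF over $\Z_2\times\F_v$ relative to $\Z_2\times\{0\}$ with block-size set $\{2,4\}$. Applying Theorem~\ref{RDF->PDF} (as packaged in Remark~\ref{rem}) with $H=\Z_2\times\{0\}$ of order $u=2$ converts this into a PDF that is the base parallel class of a harmonious linear space. The point group is $\Z_2\times\F_v$, which has order $2v$ and acts sharply transitively on the points; the underlying block-size set of $K_m\cup\{2\}$ is still $\{2,4\}$ since $|H|=2\in\{2,4\}$ already. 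Hence the resulting space is a HS$(2,\{2,4\},2v)$ harmonious under $\Z_2\times\F_v$, which is the assertion.

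The only genuine point requiring care — and the main obstacle — is verifying that the block-size set stays exactly $\{2,4\}$ throughout the composition and after the final passage to a PDF, rather than acquiring spurious sizes. In the composition step the blocks $B\circ M^c$ coming from $\mathcal F_X$ retain the sizes appearing in $K_j$, namely $2$ and $4$, while the blocks $\overline B$ coming from $\mathcal F_Y$ contribute sizes $2$ and $4$ as well, so $K_{j+1}$ has underlying set $\{2,4\}$; and in Theorem~\ref{RDF->PDF} the adjoined subgroup block $H$ has size $u=2$, again in $\{2,4\}$. One should also confirm that $\max(K_j)=4$ at every stage so that the modest difference matrix $(\F_{q_{j+1}},4,1)$ always suffices, which is immediate since no block larger than $4$ is ever created. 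With these bookkeeping checks the induction closes and the theorem follows.
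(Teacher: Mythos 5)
Your proof is correct and is essentially the paper's own argument: the Conclusion obtains this theorem precisely by iterating Remark~\ref{rem} (i.e., Theorem~\ref{composition} combined with Theorem~\ref{RDF->PDF}) on the resolvable $(\Z_2\times\F_q,\Z_2\times\{0\},\{2^{2n},4^n\},1)$-DFs underlying Theorem~\ref{2,2,4bis}, with the homogeneous difference matrices supplied by the multiplication tables of the fields $\F_{q_j}$ deprived of the zero row. Your only additions are bookkeeping details the paper leaves implicit (restricting the homogeneous $(\F_{q_{j+1}},q_{j+1}-1,1)$-DM to four rows, and checking that the underlying block-size set remains exactly $\{2,4\}$ at every stage), and these are all sound.
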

% \begin{thm}
% If $v=q_1q_2\dots q_n$ with $q_i>9$ a prime power congruent to $1$ $($mod $8)$ for each $i$, then there exists a HS$(2,\{2,4\},2v).$
% \end{thm}
\begin{thm}
There exists a S$(2,\{3,6\},3v)$ which is harmonious under $\Z_3\times\F_{v}$ provided that
$q\equiv19$ $($mod $36)$ for every component $q$ of $v$.
\end{thm}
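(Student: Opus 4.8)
The plan is to reduce the general statement to the prime-power case already settled in Theorem \ref{3,18bis} and then glue the pieces together with the composition machinery of Section 4. First I would observe that, for a prime power $q\equiv1$ $($mod $18)$, the congruence $q\equiv19$ $($mod $36)$ is equivalent to writing $q=18n+1$ with $n$ odd: indeed $q=18n+1\equiv19$ $($mod $36)$ forces $18n\equiv18$ $($mod $36)$, i.e. $n$ odd, and conversely. Hence the hypothesis that every component $q$ of $v$ satisfies $q\equiv19$ $($mod $36)$ is exactly what is needed for Theorem \ref{3,18bis} to apply to each component separately. Writing $v=q_1q_2\cdots q_s$ as the product of its components, so that $\F_v=\F_{q_1}\times\cdots\times\F_{q_s}$, the proof of Theorem \ref{3,18bis} furnishes, for each $i$, a resolvable $(\Z_3\times\F_{q_i},\Z_3\times\{0\},\{3^{4n_i},6^{n_i}\},1)$-DF, where $q_i=18n_i+1$ with $n_i$ odd.

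Next I would build a single resolvable DF over $\Z_3\times\F_v$ by iterating Theorem \ref{composition} with the fixed group $G=\Z_3$. At the $i$-th stage set $X=\F_{q_1}\times\cdots\times\F_{q_{i-1}}$ and $Y=\F_{q_i}$, so that $Z=X\times Y=\F_{q_1}\times\cdots\times\F_{q_i}$. The input over $\Z_3\times X$ is the resolvable DF produced at the previous stage, the input over $\Z_3\times Y$ is the DF from Theorem \ref{3,18bis} for the component $q_i$, and the required homogeneous $(Y,\max(K_1),1)$-DM is available because the multiplication table of $\F_{q_i}$ with its zero row deleted is a homogeneous $(\F_{q_i},q_i-1,1)$-DM, from which one keeps any $6$ rows. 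The point to check is that every DF involved uses only blocks of sizes $3$ and $6$, so that $\max(K_1)=6$ at each stage and the demand $q_i-1\geq6$ is met (it holds since $q_i\geq19$). By Theorem \ref{composition} the composed family is again a resolvable $(\Z_3\times Z,\Z_3\times\{0\},K,1)$-DF whose block sizes are still exactly $\{3,6\}$, since $K=\underline{|Y|}K_1 \ \cup \ K_2$. Iterating over $i=2,\dots,s$ (the case $s=1$ being vacuous) yields a resolvable $(\Z_3\times\F_v,\Z_3\times\{0\},K,1)$-DF whose block-size set is $\{3,6\}$.

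Finally I would apply Theorem \ref{RDF->PDF} with $G=\Z_3\times\F_v$ and $H=\Z_3\times\{0\}$, so that $u=|H|=3$. This converts the resolvable DF into a partitioned difference family which is the base parallel class of a HS$(2,K',3v)$, where $K'$ is the underlying set of $K \ \cup \ \{3\}=\{3,6\}$; the harmonious automorphism group is the translation group of $\Z_3\times\F_v$, exactly as required. When $v$ is itself a prime power the composition step disappears and the conclusion follows directly from Theorem \ref{3,18bis} combined with Theorem \ref{RDF->PDF}.

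The argument is essentially bookkeeping once Theorem \ref{3,18bis} and the composition theorem are in place, so there is no genuinely hard analytic step; the one place demanding care is the invariant that the accumulated block sizes never exceed $6$, which is what guarantees the existence of the homogeneous difference matrix feeding each application of Theorem \ref{composition}. I would establish this invariant by induction on the number of components, noting that it holds in the base case (Theorem \ref{3,18bis}) and is preserved by the identity $K=\underline{|Y|}K_1 \ \cup \ K_2$.
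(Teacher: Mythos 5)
Your proposal is correct and takes essentially the same route as the paper, which obtains this theorem by iterating Remark \ref{rem} (i.e., Theorem \ref{composition} followed by Theorem \ref{RDF->PDF}) on the resolvable $(\Z_3\times\F_{q_i},\Z_3\times\{0\},\{3^{4n_i},6^{n_i}\},1)$-DFs supplied by the proof of Theorem \ref{3,18bis}, with the multiplication table of each field component, zero row deleted, providing the homogeneous difference matrix (of which $\max(K_1)=6$ rows suffice). Your explicit checks --- the equivalence of $q\equiv19 \pmod{36}$ with $q=18n+1$, $n$ odd, and the inductive invariant that the block sizes stay in $\{3,6\}$ so a $(\F_{q_i},6,1)$-DM always suffices --- are precisely the bookkeeping the paper leaves implicit.
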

% \begin{thm}
% If $v=q_1q_2\dots q_n$ with $q_i$ a prime power congruent to $19$ $($mod $36)$ for each $i$, then there exists a HS$(2,\{3,6\},3v)$.
% \end{thm}
\begin{thm}
There exists a S$(2,\{3,4,5,6\},4v)$ which is harmonious under $\Z_2\times\Z_2\times\F_{v}$ provided that
$q\equiv1$ $($mod $14)$ and $q\in[71,9941] \ \cup \ ]Q(7,6),\infty)$ for every component $q$ of $v$.
\end{thm}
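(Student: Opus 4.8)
The plan is to assemble the required difference family over $\Z_2\times\Z_2\times\F_v$ one prime power component at a time, by iterating the composition construction, and to convert the outcome into a harmonious linear space by a single application of Theorem \ref{RDF->PDF} at the end.

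First I would fix $G=\Z_2\times\Z_2$ and let $q_1,\dots,q_m$ be the components of $v$, so that $\F_v=\F_{q_1}\times\cdots\times\F_{q_m}$ by definition. Each $q_j$ satisfies the hypothesis $q_j\equiv1\pmod{14}$ with $q_j\in[71,9941]$ or $q_j>Q(7,6)$, so the $S(2,\{3,4,5,6\},\cdot)$-theorem of the previous section provides a perfect lifting of the harmonious $(\Z_2\times\Z_2,\{3,5,6\},14)$-SDF of Example \ref{4,14} to $\Z_2\times\Z_2\times\F_{q_j}$. By Theorem \ref{PL->RDF}, this perfect lifting yields a resolvable $(\Z_2\times\Z_2\times\F_{q_j},(\Z_2\times\Z_2)\times\{0\},\{3^{n_j},5^{n_j},6^{n_j}\},1)$-DF with $n_j=(q_j-1)/14$. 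These are the per-component building blocks, and each of them has all block sizes in $\{3,5,6\}$.

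Next I would perform an induction on the number $m$ of components via the composition Theorem \ref{composition}, keeping $G=\Z_2\times\Z_2$ fixed. The base case $m=1$ is exactly the building block for $q_1$. For $m\ge2$ I set $X=\F_{q_1}\times\cdots\times\F_{q_{m-1}}$ and $Y=\F_{q_m}$, so that $X\times Y=\F_v$. The resolvable $(G\times X,G\times\{0\},K_1,1)$-DF is furnished by the induction hypothesis, the resolvable $(G\times Y,G\times\{0\},K_2,1)$-DF is the building block attached to $q_m$, and the required homogeneous $(Y,\max(K_1),1)$-DM is the multiplication table of $\F_{q_m}$ with its zero row removed and restricted to its first $\max(K_1)$ rows (deleting rows of a homogeneous difference matrix leaves a homogeneous difference matrix). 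Theorem \ref{composition} then produces a resolvable $(G\times(X\times Y),G\times\{0\},\underline{|Y|}K_1\cup K_2,1)$-DF. The one thing that must be tracked is that the block sizes do not grow: passing from $K_1,K_2$ to $\underline{|Y|}K_1\cup K_2$ only duplicates existing blocks and adjoins those of $K_2$, so the set of block sizes remains $\{3,5,6\}$ at every stage; hence $\max(K_1)=6$ throughout and the difference-matrix hypothesis reduces to $q_m-1\ge6$, which is automatic since every component exceeds $70$.

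After the $m$-th step I obtain a resolvable $(\Z_2\times\Z_2\times\F_v,(\Z_2\times\Z_2)\times\{0\},K,1)$-DF whose blocks have sizes in $\{3,5,6\}$. Applying Theorem \ref{RDF->PDF} to it, with the subgroup $H=(\Z_2\times\Z_2)\times\{0\}$ of order $u=4$, produces a $(\Z_2\times\Z_2\times\F_v,\underline{4}K\cup\{4\},4)$-PDF that is the base parallel class of a harmonious $S(2,\{3,4,5,6\},4v)$, since the underlying set of $\{3,5,6\}\cup\{4\}$ is exactly $\{3,4,5,6\}$ and $|\Z_2\times\Z_2\times\F_v|=4v$. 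By the proof of Theorem \ref{RDF->PDF} the automorphism group is the translation group $\Z_2\times\Z_2\times\F_v$, acting sharply transitively on the points and transitively on the parallel classes, as required. There is no essential obstacle here: the result is a direct iteration of the composition and conversion theorems already proved, and the only point demanding care is the bookkeeping that keeps $\max(K_1)=6$ so that the field-multiplication difference matrices remain usable at every composition step.
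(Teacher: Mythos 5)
Your proposal is correct and follows essentially the same route as the paper: the paper's Conclusion derives this theorem by iterating Remark \ref{rem} (i.e., Theorem \ref{composition} with the zero-row-deleted multiplication table of $\F_{q}$ as homogeneous difference matrix, followed by Theorem \ref{RDF->PDF}), starting from the per-component resolvable $(\Z_2\times\Z_2\times\F_{q_j},\Z_2\times\Z_2\times\{0\},\{3^{n_j},5^{n_j},6^{n_j}\},1)$-DFs obtained from the perfect liftings of the $(\Z_2^2,\{3,5,6\},14)$-SDF. Your explicit induction on the number of components, with the bookkeeping that $\max(K_1)=6$ throughout, is just a careful spelling-out of that same argument.
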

% \begin{thm}
% If $v=q_1q_2\dots q_n$ with $q_i$ a prime power congruent to $1$ $($mod $14)$ and $q_i\in[71,9941] \ \cup \ ]Q(7,6),\infty)$
% for each $i$, then there exists a HS$(2,\{3,4,5,6\},4v)$.
% \end{thm}
\begin{thm}
Let $K$ be a non-singleton set of positive integers and let $G$ be any group of order $g:=\min(K)$.
Then there is a suitable integer $\lambda$ for which there exists a S$(2,K,gv)$ which is harmonious under $G\times \F_v$
provided that we have $q\equiv \lambda+1$ $($mod $2\lambda)$ and $q>Q(\lambda,\max(K))$ for every component $q$ of $v$.
\end{thm}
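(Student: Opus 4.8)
The plan is to reduce the final, very general statement to the already-established machinery built up in the preceding sections, chiefly Remark \ref{rem} (the difference-matrix composition) together with Theorem \ref{HSDF->HS} (which turns a harmonious SDF into a harmonious linear space over $G\times\F_q$). The only genuinely new ingredient needed is to handle a \emph{composite} $v$ whose components are several prime powers, rather than a single prime power $q$; this is exactly what the iterated application of the composition construction is designed to do.

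First I would fix $G$ of order $g=\min(K)$ and invoke the proof of Theorem \ref{main} to produce, for the given non-singleton set $K$, a harmonious $(G,K',\lambda)$-SDF for a suitable index $\lambda$, where $K'$ has underlying set $K\setminus\{g\}$ (or $K$, depending on whether $g$ contributes a block size) and $\max(K')\le\max(K)$. By Lemma \ref{weil}, for any single component $q\equiv\lambda+1\pmod{2\lambda}$ with $q>Q(\lambda,\max(K))$, this SDF admits a perfect lifting to $G\times\F_q$, and hence by Theorem \ref{PL->RDF} yields a resolvable $(G\times\F_q,G\times\{0\},\underline{(q-1)/\lambda}K',1)$-DF. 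This is the base case: a single-component $v=q$ gives a resolvable $(G\times\F_v,G\times\{0\},K_0,1)$-DF for an appropriate multiset $K_0$ whose underlying set union with $\{g\}$ is $K$.

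Next I would induct on the number of components of $v$. Writing $v=v_1 q$ with $q$ the ``new'' component and $v_1$ the product of the remaining components, the inductive hypothesis supplies a resolvable $(G\times\F_{v_1},G\times\{0\},K_1,1)$-DF, while the base case supplies a resolvable $(G\times\F_q,G\times\{0\},K_2,1)$-DF. Since the multiplication table of $\F_q$ with the zero row deleted is a homogeneous $(\F_q,q-1,1)$-DM, and $q-1\ge\max(K_1)$ holds because every component exceeds $\max(K)\ge\max(K_1)$, Theorem \ref{composition} applies with $X=\F_{v_1}$, $Y=\F_q$, $Z=\F_v$ and produces a resolvable $(G\times\F_v,G\times\{0\},K,1)$-DF. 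Finally, Theorem \ref{RDF->PDF} (equivalently Remark \ref{rem}) converts this resolvable relative DF into the base parallel class of a HS$(2,K,gv)$ that is harmonious under $G\times\F_v$.

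The main point requiring care—the would-be obstacle—is the bookkeeping of block sizes and the verification that the difference-matrix hypothesis $q-1\ge\max(K_i)$ is met at every stage of the iteration, so that Theorem \ref{composition} is legitimately applicable component by component. This is where the congruence and size conditions in the statement enter: the requirement $q\equiv\lambda+1\pmod{2\lambda}$ guarantees each component admits the perfect lifting via Lemma \ref{weil}, and $q>Q(\lambda,\max(K))$ forces $q-1$ to dominate every block size appearing in any intermediate DF, keeping the homogeneous DM available throughout. Once these two uniform conditions are imposed on \emph{every} component, the induction goes through verbatim and the resulting linear space is harmonious under the full group $G\times\F_v$, as claimed.
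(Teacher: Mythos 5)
Your proposal is correct and follows essentially the same route as the paper: the paper's (sketched) proof is exactly the combination of the harmonious SDF built in the proof of Theorem \ref{main}, its perfect lifting via Lemma \ref{weil} and Theorem \ref{PL->RDF} over each component $q$ of $v$, and then the iterated application of Theorem \ref{composition} (with the homogeneous DM coming from the multiplication table of $\F_q$) and Remark \ref{rem}. Your added bookkeeping---checking $q-1\ge\max(K_1)$ at each stage via $q>Q(\lambda,\max(K))>\max(K)$---is precisely the verification the paper leaves implicit.
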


We conclude observing that the set of subsets $K$ of $\Z$ for which there exists a PDF whose block sizes 
are precisely the elements of $K$ was never determined before. 
Here, as a consequence of our main result Theorem \ref{main}, we can state the following.

\begin{thm}
The sets $K$ for which there exists a partitioned difference family whose set of block sizes is $K$
are all finite subsets of $\Z^+$ with at least two elements.
\end{thm}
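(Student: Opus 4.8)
The plan is to read the statement as an equality between two families of sets and to prove the two inclusions separately. The inclusion ``every finite non-singleton $K\subseteq\Z^+$ occurs as the set of block sizes of some PDF'' is, up to a little bookkeeping, exactly the content of our main result Theorem \ref{main}; the reverse inclusion ``the set of block sizes of any PDF is a finite subset of $\Z^+$ with at least two elements'' is elementary and rests on a short counting argument. Accordingly the write-up splits into a ``realizability'' half and a ``necessity'' half, and essentially all the hard work has already been done in the earlier sections.

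For the realizability half I would simply invoke Theorem \ref{main}. Given a finite non-singleton $K=\{k_0,\dots,k_t\}\subseteq\Z^+$ with $k_0=\min K$, the proof of Theorem \ref{main} produces, for infinitely many $v$, a HS$(2,K,v)$ whose starter parallel class is the PDF furnished by Corollary \ref{cor} (equivalently by Theorem \ref{RDF->PDF}). The only thing to verify is that the underlying set of block sizes of that PDF is exactly $K$: it is a $(G\times\F_q,\underline{gn}K'\cup\{g\},g)$-PDF with $g=k_0$ and $K'$ a multiset whose underlying set already contains $k_0$, so the underlying set of $\underline{gn}K'\cup\{g\}$ equals that of $K'\cup\{g\}$, which is $K$. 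Hence every finite non-singleton $K$ really does arise.

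For the necessity half, let $\mathcal F$ be a PDF in a group $G$ of order $v$, and let $K$ be its set of block sizes. Since the blocks of $\mathcal F$ partition the finite set $G$, there are finitely many of them and each has positive integer cardinality, so $K$ is automatically a finite subset of $\Z^+$; the only real content is that $|K|\geq2$. Suppose instead $\mathcal F$ were uniform, with all $n$ blocks of a common size $k$, so that $v=kn$. As the blocks partition $G$ they are genuine sets, so $\Delta\mathcal F$ has cardinality $nk(k-1)$, while $\Delta\mathcal F=\lambda(G\setminus\{0\})$ forces $nk(k-1)=\lambda(kn-1)$. Writing $nk(k-1)=(k-1)(kn-1)+(k-1)$ shows $(kn-1)\mid(k-1)$, which for $k\geq2$ is possible only when $n=1$, and for $k=1$ only when $v=1$.

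The only obstacle, then, is this degenerate uniform case: when $n=1$ the single block is all of $G$, so $\mathcal F=\{G\}$ is the trivial partitioned difference set of index $v$ (and $k=1$ lands in the trivial group $v=1$). These are the unique uniform PDFs. Since a difference \emph{family}, as opposed to a difference \emph{set}, is understood to possess more than one block (cf.\ the definitions given earlier, where a one-block object is by definition a difference set), such degenerate single-block objects are not PDFs in the present sense; ruling them out forces $|K|\geq2$ and completes the reverse inclusion. I expect this careful disposal of the whole-group block to be the only genuine subtlety in the proof, everything else being either immediate or already established.
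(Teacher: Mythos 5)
Your realizability half coincides with what the paper actually does: the paper offers no separate proof of this theorem at all, presenting it purely as a consequence of Theorem \ref{main} (whose construction, via Corollary \ref{cor} and Theorem \ref{RDF->PDF}, yields a $(G\times\F_q,\underline{gn}K'\cup\{g\},g)$-PDF whose underlying set of block sizes is $K$). One small inaccuracy there: your parenthetical claim that the underlying set of $K'$ already contains $k_0$ fails in the paper's second case when $K=\{k_0,k_0+1\}$ and $t=1$, where $\Sigma$ is uniform of block size $k_0+1$; this is harmless, since adjoining $\{g\}=\{k_0\}$ supplies $k_0$ regardless. The necessity half --- that no singleton $K$ occurs --- is left entirely implicit in the paper, so your counting argument is a genuine addition, and it is essentially correct: from $nk(k-1)=\lambda(kn-1)$ one gets $(kn-1)\mid(k-1)$, which for $k\geq2$ forces $n=1$; for $k=1$, however, the divisibility is vacuous ($(kn-1)\mid 0$ always holds), and you must instead read the equation itself, $0=\lambda(n-1)$, together with $\lambda\geq1$, to force $v=1$.

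One caveat on how you dispose of the $n=1$ case: it rests on a misreading of the paper's conventions. The paper says a difference family with only one block \emph{is called} a difference set; it does not say that one-block families fail to be difference families. Under the paper's literal definitions, $\{G\}$ is a $(G,\{v\},v)$-PDF (indeed $\Delta G=\underline{v}(G\setminus\{0\})$ and its single block partitions $G$), so the singleton $\{v\}$ would be realizable and the theorem, read literally, would be false. Both the paper's statement and your proof therefore tacitly require the standard non-triviality convention excluding this object (and the index-$0$ partition into singletons); what your divisibility argument genuinely establishes is that these trivial objects are the \emph{only} uniform PDFs, which is the real content of the necessity direction and is a worthwhile point that the paper never spells out.
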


\section*{Acknowledgement}
This work has been performed under the auspices of the G.N.S.A.G.A. of the C.N.R. (National Research Council) of Italy.

\end{document}